%%%%%%%%%%%%%%%%%%%%%%%%%%%%%%%%%%%%%%%%%%%%%%%%%%%%%%%%%%%%%%%%%%%%%%%%%%%%%%%%
%2345678901234567890123456789012345678901234567890123456789012345678901234567890
%        1         2         3         4         5         6         7         8

%\documentclass[11pt,journal,onecolumn,letterpaper]{IEEEtran}  % Comment this line out if you need a4paper

\documentclass[11pt]{article}
\usepackage{fullpage}

%\documentclass[a4paper, 10pt, conference]{ieeeconf}      % Use this line for a4 paper

% The following packages can be found on http:\\www.ctan.org
%\usepackage{graphics} % for pdf, bitmapped graphics files
%\usepackage{epsfig} % for postscript graphics files
%\usepackage{mathptmx} % assumes new font selection scheme installed
%\usepackage{times} % assumes new font selection scheme installed
\usepackage{amsmath} % assumes amsmath package installed
\usepackage{amssymb}  % assumes amsmath package installed

\usepackage{amsthm}
\usepackage{color}

\usepackage{cite}

\usepackage{graphicx}
\usepackage{epstopdf}
\usepackage{subfigure}

\newtheorem{theorem}{Theorem}
\newtheorem{lemma}{Lemma}
\newtheorem{corollary}{Corollary}
\newtheorem{assumption}{Assumption}

\theoremstyle{remark}
\newtheorem{remark}{Remark}

\title{\LARGE \bf
A Distributed Stochastic Gradient Tracking Method
}

\author{Shi~Pu and Angelia~Nedi{\'c}% <-this % stops a space
\thanks{This work was supported in parts by the NSF grant CPS 15-44953 and the ONR grant no.\ N00014-12-1-0998.}% <-this % stops a space
\thanks{Shi Pu and Angelia Nedi{\'c} are with the  School of Electrical, Computer, and Energy Engineering, Arizona
	State University, Tempe, AZ 85287, USA.
        {\tt\small (emails: sp3dw@virginia.edu, Angelia.Nedich@asu.edu)}}%
}

\newcommand{\mx}{\mathbf{x}}
\newcommand{\my}{\mathbf{y}}

\newcommand{\ox}{\overline{x}}
\newcommand{\oy}{\overline{y}}

\newcommand{\bE}{\mathbb{E}}

\newcommand{\T}{\intercal}

\begin{document}

\maketitle
\thispagestyle{empty}
\pagestyle{empty}

%%%%%%%%%%%%%%%%%%%%%%%%%%%%%%%%%%%%%%%%%%%%%%%%%%%%%%%%%%%%%%%%%%%%%%%%%%%%%%%%
\begin{abstract}
In this paper, we study the problem of distributed multi-agent optimization over a network, where each agent possesses a local cost function that is smooth and strongly convex. The global objective is to find a common solution that minimizes the average of all cost functions. Assuming agents only have access to unbiased estimates of the gradients of their local cost functions, we consider a distributed stochastic gradient tracking method. We show that, in expectation, the iterates generated by each agent are attracted to a neighborhood of the optimal solution, where they accumulate exponentially fast (under a constant step size choice). 
More importantly, the limiting (expected) error bounds on the distance of the iterates from the optimal solution decrease with the network size, which is a comparable performance to a centralized stochastic gradient algorithm. 
Numerical examples further demonstrate the effectiveness of the method.

{\color{red}This is a preliminary version of the paper \cite{pu2018distributed2}.}
\end{abstract}

%%%%%%%%%%%%%%%%%%%%%%%%%%%%%%%%%%%%%%%%%%%%%%%%%%%%%%%%%%%%%%%%%%%%%%%%%%%%%%%%
	\section{Introduction}
Consider a set of agents $\mathcal{N}=\{1,2,\ldots,n\}$ connected over a network. Each agent has a local smooth and strongly convex cost function $f_i:\mathbb{R}^{p}\rightarrow \mathbb{R}$. The global objective is to locate $x\in\mathbb{R}^p$ that minimizes the average of all cost functions:
\begin{equation}
\min_{x\in \mathbb{R}^{p}}f(x)\left(=\frac{1}{n}\sum_{i=1}^{n}f_i(x)\right).  \label{opt Problem_def}
\end{equation}%
Scenarios in which problem (\ref{opt Problem_def}) is considered include distributed machine learning \cite{forrester2007multi,nedic2017fast,cohen2017projected,wai2018multi}, multi-agent target seeking \cite{pu2016noise,chen2012diffusion}, and wireless networks \cite{cohen2017distributed,mateos2012distributed,baingana2014proximal}, among many others.

% and $X\subseteq \mathbb{R}^{m}$. 
To solve problem (\ref{opt Problem_def}), we assume each agent $i$ queries a stochastic oracle ($\mathcal{SO}$) to obtain noisy gradient samples of the form $g_i(x,\xi_i)$ that satisfies the following condition:
\begin{assumption}
	\label{asp: gradient samples}
	For all $i\in\mathcal{N}$ and all $x\in\mathbb{R}^p$, 
	each random vector $\xi_i\in\mathbb{R}^m$ is independent, and
	\begin{equation}
	\begin{split}
	& \mathbb{E}_{\xi_i}[g_i(x,\xi_i)\mid x] =  \nabla f_i(x),\\
	& \mathbb{E}_{\xi_i}[\|g_i(x,\xi_i)-\nabla f_i(x)\|^2\mid x]  \le  \sigma^2\quad\hbox{\ for some $\sigma>0$}.
	\end{split}
	\end{equation}
\end{assumption}
The above assumption of stochastic gradients holds true for many on-line distributed learning problems, where $f_i(x)=\bE_{\xi_i}[F_i(x,\xi_i)]$ denotes the expected loss function agent $i$ wishes to minimize, while independent samples $\xi_i$ are gathered continuously over time.
For another example, in simulation-based optimization, the gradient estimation often incurs noise 
that can be due to various sources, such as modeling and discretization errors, 
incomplete convergence, and finite sample size for Monte-Carlo methods~\cite{kleijnen2008design}. 

Distributed algorithms dealing with problem (\ref{opt Problem_def}) have been studied extensively in the literature \cite{tsitsiklis1986distributed,nedic2009distributed,nedic2010constrained,jakovetic2014fast,kia2015distributed,shi2015extra,di2016next,qu2017harnessing,nedic2017achieving,pu2018push}.
%Stochastic optimization methods addressing Assumption \ref{asp: gradient samples} date back to the seminal works \cite{robbins1951stochastic} and \cite{kiefer1952stochastic}. 
Recently, there has been considerable interest in distributed implementation of stochastic gradient algorithms (see \cite{ram2010distributed,cavalcante2013distributed,towfic2014adaptive,lobel2011distributed,srivastava2011distributed,wang2015cooperative,lan2017communication,pu2017flocking,lian2017can,pu2018swarming}). The literature has shown that distributed algorithms may compete with, or even outperform, their centralized counterparts under certain conditions \cite{pu2017flocking,lian2017can,pu2018swarming}. For instance, in our recent work~\cite{pu2018swarming}, we proposed a swarming-based approach for distributed stochastic optimization which beats a centralized gradient method in real-time assuming that all $f_i$ are identical. 
However, to the best of our knowledge, there is no distributed stochastic gradient method addressing problem (\ref{opt Problem_def}) that shows comparable performance with a centralized approach. In particular, under constant step size policies none of the existing algorithms achieve an error bound that is decreasing in the network size $n$. 

%Recent works \cite{nedić2017achieving,qu2017harnessing} utilize a gradient tracking technique that allows the solutions converge linearly to the optimum under constant step sizes.

A distributed gradient tracking method was proposed in~\cite{di2016next,nedic2017achieving,qu2017harnessing}, where 
the agent-based auxiliary variables $y_i$ were introduced to track the average gradients of $f_i$ assuming accurate gradient information is available. It was shown that the method, with constant step size, generates iterates that converge linearly to the optimal solution.
Inspired by the approach, in this paper we consider a distributed stochastic gradient tracking method. By comparison, in our proposed algorithm $y_i$ are tracking the stochastic gradient averages of $f_i$.
We are able to show that the iterates generated by each agent reach, in expectation, a neighborhood of the optimal point exponentially fast under a constant step size.
Interestingly, with a sufficiently small step size, the limiting error bounds on the distance 
between agent iterates and the optimal solution decrease in the network size $n$, which is comparable to the performance of a centralized stochastic gradient algorithm.

Our work is also related to the extensive literature in stochastic approximation (SA) methods dating back to the seminal works~\cite{robbins1951stochastic} and~\cite{kiefer1952stochastic}. These works include the analysis of convergence (conditions for convergence, rates of convergence, suitable choice of step size) in the context of diverse noise models~\cite{kushner2003stochastic}.

The paper is organized as follows. In Section~\ref{sec: set}, we introduce the distributed stochastic gradient tracking method along with the main results. We perform analysis in Section~\ref{sec:cohesion} 
and provide a numerical example in Section~\ref{sec: simulation} to illustrate our theoretical findings. 
Section~\ref{sec: conclusion} concludes the paper.

%Recent progress include \cite{ghadimi-a,ghadimi-b,roux} and \cite{hennig}. 

\subsection{Notation}
\label{subsec:pre}
Throughout the paper, vectors default to columns if not otherwise specified.
Let each agent $i$ hold a local copy $x_i\in\mathbb{R}^p$ of the decision variable and an auxiliary variable $y_i\in\mathbb{R}^p$. Their values at iteration/time $k$ are denoted by $x_{i,k}$ and $y_{i,k}$, respectively. 
We let
\begin{equation*}
 \mx := [x_1, x_2, \ldots, x_n]^{\T}\in\mathbb{R}^{n\times p},\ \ \my := [y_1, y_2, \ldots, y_n]^{\T}\in\mathbb{R}^{n\times p},
\end{equation*}
%and
\begin{equation}
\ox :=  \frac{1}{n}\mathbf{1}^{\T} \mx\in\mathbb{R}^{1\times p},\ \ \oy :=  \frac{1}{n}\mathbf{1}^{\T}\my\in\mathbb{R}^{1\times p},
\end{equation}
where $\mathbf{1}$ denotes the vector with all entries equal to 1.
We define an aggregate objective function of the local variables:
\begin{equation}
F(\mx):=\sum_{i=1}^nf_i(x_i),
\end{equation}
and write
\begin{equation*}
\nabla F(\mx):=\left[\nabla f_1(x_1), \nabla f_2(x_2), \ldots, \nabla f_n(x_n)\right]^{\T}\in\mathbb{R}^{n\times p}.
\end{equation*}
In addition, let
\begin{equation}
\label{def: h}
h(\mx):=\frac{1}{n}\mathbf{1}^{\T}\nabla F(\mx)\in\mathbb{R}^{1\times p},
\end{equation}
\begin{equation*}
\boldsymbol{\xi} := [\xi_1, \xi_2, \ldots, \xi_n]^{\T}\in\mathbb{R}^{n\times p},
\end{equation*}
and
\begin{equation}
G(\mx,\boldsymbol{\xi}):=[g_1(x_1,\xi_1), g_2(x_2,\xi_2), \ldots, g_n(x_n,\xi_n)]^{\T}\in\mathbb{R}^{n\times p}.
\end{equation}

Inner product of two vectors $a,b$ of the same dimension is written as $\langle a,b\rangle$. For two matrices $A,B\in\mathbb{R}^{n\times p}$, we define
\begin{equation}
\langle A,B\rangle :=\sum_{i=1}^n\langle A_i,B_i\rangle,
\end{equation}
where $A_i$ (respectively, $B_i$) represents the $i$-th row of $A$ (respectively, $B$). We use $\|\cdot\|$ to denote the $2$-norm of vectors; for matrices, $\|\cdot\|$ denotes the Frobenius norm.

A graph is a pair $\mathcal{G}=(\mathcal{V},\mathcal{E})$ where $\mathcal{V}=\{1,2,\ldots,n\}$ is the set of vertices (nodes) and $\mathcal{E}\subseteq \mathcal{V}\times \mathcal{V}$ represents the set of edges connecting vertices. We assume agents communicate in an undirected graph, i.e., $(i,j)\in\mathcal{E}$ iff $(j,i)\in\mathcal{E}$.
Denote by $W=[w_{ij}]\in\mathbb{R}^{n\times n}$ the coupling matrix of agents. Agent $i$ and $j$  are connected iff $w_{ij}=w_{ji}>0$ ($w_{ij}=w_{ji}=0$ otherwise). Formally, we assume the following condition regarding the interaction among agents:
\begin{assumption}
	\label{asp: network}
	The graph $\mathcal{G}$ corresponding to the network of agents is undirected and connected (there exists a path between any two agents). Nonnegative coupling matrix $W$ is doubly stochastic, 
	i.e., $W\mathbf{1}=\mathbf{1}$ and $\mathbf{1}^{\T}W=\mathbf{1}^{\T}$.
	In addition, $w_{ii}>0$ for some $i\in\mathcal{N}$.
\end{assumption}
%In view of this assumption, due to being symmetric, the matrix $W$ is doubly stochastic, i.e., $\mathbf{1}^{\T}W=W\mathbf{1}=\mathbf{1}$.
We will frequently use the following result, which is a direct implication of Assumption \ref{asp: network} (see \cite{qu2017harnessing} Section II-B):
\begin{lemma}
	\label{lem: spectral norm}
	Let Assumption \ref{asp: network} hold, and let $\rho_w$ denote the spectral norm of 
	the matrix $W-\frac{1}{n}\mathbf{1}\mathbf{1}^{\T}$. Then, $\rho_w<1$ and 
	\begin{equation*}
	\|W\omega-\mathbf{1}\overline{\omega}\|\le \rho_w\|\omega-\mathbf{1}\overline{\omega}\|
	\end{equation*}
	for all $\omega\in\mathbb{R}^{n\times p}$, where $\overline{\omega}=\frac{1}{n}\mathbf{1}^{\T}\omega$.
\end{lemma}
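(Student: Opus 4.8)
The plan is to handle the two assertions separately: first the spectral bound $\rho_w<1$, which is a statement about the eigenvalues of $W$, and then the contraction inequality, which reduces to an algebraic identity plus submultiplicativity of the norm.

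For $\rho_w<1$, I would exploit that $W$ is symmetric (since $w_{ij}=w_{ji}$) and doubly stochastic, so all its eigenvalues are real, lie in $[-1,1]$, and $\mathbf{1}$ is an eigenvector for the eigenvalue $1$. Writing $P:=\frac{1}{n}\mathbf{1}\mathbf{1}^{\T}$ for the orthogonal projector onto $\mathrm{span}(\mathbf{1})$, the matrix $M:=W-P$ is symmetric, and I would analyze it on the orthogonal splitting $\mathbb{R}^n=\mathrm{span}(\mathbf{1})\oplus\mathbf{1}^{\perp}$. On $\mathrm{span}(\mathbf{1})$ one has $M\mathbf{1}=W\mathbf{1}-P\mathbf{1}=\mathbf{1}-\mathbf{1}=0$, while on $\mathbf{1}^{\perp}$ the projector kills the vector, so $Mv=Wv$. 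Hence the eigenvalues of $M$ are $0$ together with all eigenvalues of $W$ other than the top one. It then remains to argue that these are strictly inside $(-1,1)$: connectedness of $\mathcal{G}$ makes the eigenvalue $1$ simple, so no other eigenvalue equals $1$, and the self-loop condition $w_{ii}>0$ rules out the eigenvalue $-1$ (aperiodicity). Since $M$ is symmetric, its spectral norm equals its largest eigenvalue in magnitude, giving $\rho_w<1$.

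For the inequality, the key step is the identity $W\omega-\mathbf{1}\overline{\omega}=M(\omega-\mathbf{1}\overline{\omega})$. I would verify it by noting $\mathbf{1}\overline{\omega}=P\omega$ and expanding $M(I-P)=(W-P)(I-P)=W-WP-P+P^2$; using $W\mathbf{1}=\mathbf{1}$ gives $WP=P$, and $P^2=P$, so $M(I-P)=W-P=M$, whence $M(\omega-\mathbf{1}\overline{\omega})=M(I-P)\omega=M\omega=W\omega-\mathbf{1}\overline{\omega}$. Taking Frobenius norms and applying the column-wise bound $\|MX\|\le\rho_w\|X\|$, which holds because each column of $MX$ is $M$ times the corresponding column of $X$ and $\|Mx\|\le\rho_w\|x\|$ for vectors (so $\|MX\|^2=\sum_j\|Mx_j\|^2\le\rho_w^2\sum_j\|x_j\|^2=\rho_w^2\|X\|^2$), then yields the claim with $X=\omega-\mathbf{1}\overline{\omega}$.

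The main obstacle is the spectral step establishing $\rho_w<1$: the algebraic identity and the norm bound are routine, but pinning down that every non-principal eigenvalue of $W$ is strictly separated from $\pm 1$ genuinely requires invoking connectedness (for simplicity of the eigenvalue $1$) together with the self-loop condition $w_{ii}>0$ (to exclude $-1$). Since this is a standard consequence of Perron--Frobenius theory and is cited from \cite{qu2017harnessing}, I would state it with a brief justification rather than reprove it in full.
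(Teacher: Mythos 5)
Your proof is correct; the paper does not prove this lemma itself but simply cites it as a known consequence of Assumption \ref{asp: network} (referring to \cite{qu2017harnessing}, Section II-B), and your argument --- the orthogonal splitting $\mathbb{R}^n=\mathrm{span}(\mathbf{1})\oplus\mathbf{1}^{\perp}$, Perron--Frobenius via irreducibility and the positive diagonal entry to separate the non-principal spectrum from $\pm 1$, and the identity $W\omega-\mathbf{1}\overline{\omega}=(W-\tfrac{1}{n}\mathbf{1}\mathbf{1}^{\T})(\omega-\mathbf{1}\overline{\omega})$ followed by a column-wise Frobenius bound --- is exactly the standard one underlying that reference. No gaps; the only stylistic remark is that, as you note, the spectral separation is the one step that genuinely uses connectedness and $w_{ii}>0$, and stating it with a brief Perron--Frobenius justification is entirely appropriate here.
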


\section{A Distributed Stochastic Gradient Tracking Method}
\label{sec: set}
We consider the following distributed stochastic gradient tracking method: at each step $k\in\mathbb{N}$, 
every agent $i$ independently implements the following two steps:
\begin{equation}
\label{eq: x_i,k}
\begin{split}
x_{i,k+1} = & \sum_{j=1}^{n}w_{ij}(x_{j,k}-\alpha y_{j,k}), \\
y_{i,k+1} = & \sum_{j=1}^{n}w_{ij}y_{j,k}+g_i(x_{i,k+1},\xi_{i,k+1})-g_i(x_{i,k},\xi_{i,k}),
\end{split}
\end{equation}
where $\alpha>0$ is a constant step size. The iterates are initiated with an arbitrary 
$x_{i,0}$ and $y_{i,0}= g_i(x_{i,0},\xi_{i,0})$ for all~$i\in{\cal N}$.
We can also write (\ref{eq: x_i,k}) in the following compact form:
\begin{equation}
\label{eq: x_k}
\begin{split}
\mx_{k+1} = & W(\mx_k-\alpha \my_k), \\
\my_{k+1} = & W\my_k+G(\mx_{k+1},\boldsymbol{\xi}_{k+1})-G(\mx_k,\boldsymbol{\xi}_k).
\end{split}
\end{equation}
Algorithm (\ref{eq: x_i,k}) is closely related to the schemes considered in \cite{di2016next,nedic2017achieving,qu2017harnessing}, where auxiliary variables $y_{i,k}$ 
were introduced to track the average $\frac{1}{n}\sum_{i=1}^{n}\nabla f_i(x_{i,k})$. This design ensures that the algorithm achieves linear convergence under constant step size choice.
Correspondingly, under our approach $y_{i,k}$ are (approximately) tracking $\frac{1}{n}\sum_{i=1}^{n}g_i(x_{i,k},\xi_{i,k})$.
% since we are using stochastic gradients $g_i(x_{i,k},\xi_{i,k})$ instead of exact ones $\nabla f_i(x_{i,k})$. 
To see why this is the case, note that
\begin{equation}
\oy_k = \frac{1}{n}\mathbf{1}^{\T} \my_k.
\end{equation}
Since $y_{i,0}=g(x_{i,0},\xi_{i,0}),\forall i$. By induction,
\begin{equation}
\oy_k=\frac{1}{n}\mathbf{1}^{\T}G(\mx_k,\boldsymbol{\xi}_k),\forall k.
\end{equation}
We will show that $\my_k$ is close to $\mathbf{1}\oy_k$ at each round. Hence $y_{i,k}$ are (approximately) tracking $\frac{1}{n}\sum_{i=1}^{n}g_i(x_{i,k},\xi_{i,k})$.

Throughout the paper, we make the following standing assumption on the objective functions $f_i$:
\begin{assumption}
	\label{asp: strconvexity}
	Each $f_i:\mathbb{R}^p\rightarrow \mathbb{R}$ is  $\mu$-strongly convex with $L$-Lipschitz continuous gradients, i.e., for any $x,x'\in\mathbb{R}^p$,
	\begin{equation}
	\begin{split}
	& \langle \nabla f_i(x)-\nabla f_i(x'),x-x'\rangle\ge \mu\|x-x'\|^2,\\
	& \|\nabla f_i(x)-\nabla f_i(x')\|\le L \|x-x'\|.
	\end{split}
	\end{equation}
\end{assumption}
Under Assumption \ref{asp: strconvexity}, problem (\ref{opt Problem_def}) has a unique solution denoted by $x^*\in\mathbb{R}^{1\times p}$.
%Here $W=[w_{ij}]$ is the coupling matrix assumed to be doubly stochastic. 

\subsection{Main Results}
\label{subsec: main}
Main convergence properties of the distributed gradient tracking method (\ref{eq: x_i,k}) are covered in the following theorem.
\begin{theorem}
	\label{Theorem}
	Suppose Assumptions \ref{asp: gradient samples}-\ref{asp: strconvexity} hold and $\alpha$ satisfies
	\begin{equation}
		\label{alpha_ultimate_bound}
		\alpha\le \min\left\{\frac{(1-\rho_w^2)}{12\rho_w^2 L},\frac{(1-\rho_w^2)^2}{2\sqrt{\Gamma}L\max\{6\rho_w\|W-I\|,1-\rho_w^2\}}, \frac{(1-\rho_w^2)}{3\rho_w^{2/3}L}\left[\frac{\mu^2}{L^2}\frac{(\Gamma-1)}{\Gamma(\Gamma+1)}\right]^{1/3}\right\}
		\end{equation}
	for some $\Gamma>1$. Then both $\sup_{l\ge k}\bE[\|\ox_l-x^*\|^2]$ and $\sup_{l\ge k}\bE[\|\mx_{l+1}-\mathbf{1}\ox_{l+1}\|^2]$ converge at the linear rate $\mathcal{O}(\rho_A^k)$, where $\rho_A<1$ is the spectral radius of
	\begin{eqnarray*}
			A=\begin{bmatrix}
				1-\alpha\mu & \frac{\alpha L^2}{\mu n}(1+\alpha\mu) & 0\\
				0 & \frac{1}{2}(1+\rho_w^2) & \alpha^2\frac{(1+\rho_w^2)\rho_w^2}{(1-\rho_w^2)}\\
				2\alpha nL^3 & \left(\frac{1}{\beta}+2\right)\|W-I\|^2 L^2+3\alpha L^3 & \frac{1}{2}(1+\rho_w^2)
			\end{bmatrix},
	\end{eqnarray*}
in which $\beta=\frac{1-\rho_w^2}{2\rho_w^2}-4\alpha L-2\alpha^2 L^2$.
Furthermore,
\begin{equation}
	\label{error_bound_ultimate}
	\limsup_{k\rightarrow\infty}\bE[\|\ox_k-x^*\|^2]\le \frac{(\Gamma+1)}{\Gamma}\frac{\alpha\sigma^2}{\mu n}
	+\left(\frac{\Gamma+1}{\Gamma-1}\right)\frac{4\alpha^2 L^2(1+\alpha\mu)(1+\rho_w^2)\rho_w^2}{\mu^2 n(1-\rho_w^2)^3}M_{\sigma},
	\end{equation}
and
\begin{equation}
	\label{consensus_error_bound_ultimate}
	\limsup_{k\rightarrow\infty}\bE[\|\mx_k-\mathbf{1}\ox_k\|^2]
	\le \left(\frac{\Gamma+1}{\Gamma-1}\right)\frac{4\alpha^2 (1+\rho_w^2)\rho_w^2(2\alpha^2L^3\sigma^2+\mu M_{\sigma})}{\mu(1-\rho_w^2)^3},
	\end{equation}
where 
\begin{equation}
	\label{M_sigma}
	M_{\sigma}:=\left[3\alpha^2L^2+2(\alpha L+1)(n+1)\right]\sigma^2.
	\end{equation}
\end{theorem}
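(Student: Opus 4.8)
The plan is to compress three coupled error quantities into a single vector inequality governed by the matrix $A$, and then extract both the linear rate and the limiting bounds from its spectral radius. Concretely, I would track the optimization (mean) error, the consensus error, and the gradient-tracking error,
\[
u_k := \bE[\|\ox_k - x^*\|^2],\quad v_k := \bE[\|\mx_k - \mathbf{1}\ox_k\|^2],\quad z_k := \bE[\|\my_k - \mathbf{1}\oy_k\|^2],
\]
and establish a componentwise recursion $[u_{k+1},v_{k+1},z_{k+1}]^{\T}\le A\,[u_k,v_k,z_k]^{\T}+b$ for a nonnegative constant vector $b$ depending on $\sigma^2$ (its third entry involving $M_{\sigma}$). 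Each row of $A$ comes from bounding one of these quantities after one iteration of (\ref{eq: x_k}).

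For the first row, doubly stochasticity of $W$ gives $\ox_{k+1}=\ox_k-\alpha\oy_k$ with $\oy_k=\frac{1}{n}\mathbf{1}^{\T}G(\mx_k,\boldsymbol{\xi}_k)$. I would split $\oy_k=h(\mx_k)+(\oy_k-h(\mx_k))$, where the second term is conditionally mean-zero, so taking conditional expectation eliminates the cross term and, by independence across agents, its second moment is at most $\sigma^2/n$ — this is the source of the network-size gain. For the deterministic part I would write $\ox_k-x^*-\alpha\oy_k$ as $(\ox_k-x^*-\alpha\nabla f(\ox_k))+\alpha(\nabla f(\ox_k)-h(\mx_k))$, bound the first summand by $(1-\alpha\mu)\|\ox_k-x^*\|$ via standard strong-convexity/smoothness estimates (valid since $\alpha\le 1/L$), bound the second by $\alpha\frac{L}{\sqrt n}\|\mx_k-\mathbf{1}\ox_k\|$ using $L$-Lipschitzness and Cauchy--Schwarz, and combine through the Young inequality $\|a+b\|^2\le(1+\alpha\mu)\|a\|^2+(1+\tfrac{1}{\alpha\mu})\|b\|^2$; this produces exactly the $(1,1)$, $(1,2)$ entries and the constant $\alpha^2\sigma^2/n$. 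For the second row, using $(I-\tfrac1n\mathbf{1}\mathbf{1}^{\T})W=W-\tfrac1n\mathbf{1}\mathbf{1}^{\T}$ I would write $\mx_{k+1}-\mathbf{1}\ox_{k+1}=(W-\tfrac1n\mathbf{1}\mathbf{1}^{\T})(\mx_k-\alpha\my_k)$, apply Lemma \ref{lem: spectral norm} to each term to get $\|\mx_{k+1}-\mathbf{1}\ox_{k+1}\|\le\rho_w\|\mx_k-\mathbf{1}\ox_k\|+\alpha\rho_w\|\my_k-\mathbf{1}\oy_k\|$, then square and apply Young's inequality with parameter $\frac{1-\rho_w^2}{2\rho_w^2}$ to recover the coefficients $\frac12(1+\rho_w^2)$ and $\alpha^2\frac{(1+\rho_w^2)\rho_w^2}{1-\rho_w^2}$, with no constant term.

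The third row is the main obstacle. Here $\my_{k+1}-\mathbf{1}\oy_{k+1}=(W-\tfrac1n\mathbf{1}\mathbf{1}^{\T})\my_k+(I-\tfrac1n\mathbf{1}\mathbf{1}^{\T})[G(\mx_{k+1},\boldsymbol{\xi}_{k+1})-G(\mx_k,\boldsymbol{\xi}_k)]$, so I would decompose the gradient difference into its deterministic part $\nabla F(\mx_{k+1})-\nabla F(\mx_k)$, bounded by $L\|\mx_{k+1}-\mx_k\|$, and the two zero-mean stochastic parts controlled by Assumption \ref{asp: gradient samples}. The increment $\mx_{k+1}-\mx_k=(W-I)(\mx_k-\mathbf{1}\ox_k)-\alpha W\my_k$ must then be bounded in terms of $v_k$, $z_k$, and $u_k$; the last enters through $\|\mathbf{1}\oy_k\|^2=n\|\oy_k\|^2$ together with $\bE\|\oy_k\|^2\lesssim L^2u_k+\sigma^2/n$ (using $\nabla f(x^*)=0$). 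This is precisely where the factors $2\alpha nL^3$, $\|W-I\|^2L^2$, and $3\alpha L^3$ emerge along with the constant $M_{\sigma}$, and where a further Young splitting introduces the parameter $\beta=\frac{1-\rho_w^2}{2\rho_w^2}-4\alpha L-2\alpha^2L^2$; requiring $\beta>0$ — equivalently the first entry of the min in (\ref{alpha_ultimate_bound}) — keeps this row well-posed. Carefully tracking which terms are measurable with respect to the current filtration (so that conditional unbiasedness kills the right cross terms) is the delicate bookkeeping step.

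Finally, with $w_{k+1}\le A w_k+b$ in hand, iterating gives $w_k\le A^kw_0+\sum_{j=0}^{k-1}A^jb$; since $A$ is nonnegative with $\rho_A<1$, the transient $A^kw_0$ decays like $\mathcal{O}(\rho_A^k)$ and $\sum_jA^jb\to(I-A)^{-1}b$, which yields the claimed linear convergence of $\sup_{l\ge k}u_l$ and $\sup_{l\ge k}v_l$. To certify $\rho_A<1$ I would exhibit a positive vector $\delta$ with $A\delta<\delta$ componentwise (equivalently, check the leading principal minors of $I-A$); the three thresholds in (\ref{alpha_ultimate_bound}), indexed by $\Gamma>1$, are exactly the step sizes making this hold. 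The limiting bounds (\ref{error_bound_ultimate})--(\ref{consensus_error_bound_ultimate}) then follow by passing to the $\limsup$ in the system: the second inequality gives $v_\infty\le\frac{2\alpha^2(1+\rho_w^2)\rho_w^2}{(1-\rho_w^2)^2}z_\infty$, the first gives $u_\infty\le\frac{L^2(1+\alpha\mu)}{\mu^2 n}v_\infty+\frac{\alpha\sigma^2}{\mu n}$, and back-substituting into the third closes the loop, with $\Gamma$ measuring the slack in the loop gain and producing the factors $\frac{\Gamma+1}{\Gamma}$ and $\frac{\Gamma+1}{\Gamma-1}$.
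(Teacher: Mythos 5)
Your plan is the paper's proof in all structural respects: the same three quantities, the same componentwise recursion with the same matrix $A$, essentially the same derivations of the first two rows (conditionally centering $\oy_k$ around $h(\mx_k)$ with variance $\sigma^2/n$, and the $\rho_w$/Young argument for the consensus error), an equivalent M-matrix criterion for $\rho_A<1$ (your positive-vector test $A\delta<\delta$ is interchangeable with the determinant condition of Lemma~\ref{lem: rho_M}), and the same $(I-A)^{-1}b$ computation for the limiting bounds.

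The one genuine gap is in the third row. You describe the stochastic parts of $G(\mx_{k+1},\boldsymbol{\xi}_{k+1})-G(\mx_k,\boldsymbol{\xi}_k)$ as zero-mean terms whose cross terms are ``killed'' by conditional unbiasedness after careful measurability bookkeeping. Two of those cross terms do \emph{not} vanish, namely
\begin{equation*}
\bE\left[\langle \nabla F(\mx_{k+1}),\,\nabla F(\mx_k)-G(\mx_k,\boldsymbol{\xi}_k)\rangle\mid\mathcal{H}_k\right]
\quad\text{and}\quad
\bE\left[\langle W\my_k-\mathbf{1}\oy_k,\,\nabla F(\mx_k)-G(\mx_k,\boldsymbol{\xi}_k)\rangle\mid\mathcal{H}_k\right],
\end{equation*}
because both $\mx_{k+1}$ and $\my_k$ depend on $\boldsymbol{\xi}_k$. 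Handling them is not bookkeeping but requires structure-specific arguments: for the first, the only dependence of $x_{i,k+1}$ on $\xi_{i,k}$ enters through the single term $-\alpha g_i(x_{i,k},\xi_{i,k})$, so componentwise Lipschitz continuity of $\nabla f_i$ converts the cross term into at most $\alpha L$ times the noise variance, giving the bound $\alpha Ln\sigma^2$ (Lemma~\ref{lem: 3 nablas}); for the second, isolating the $\xi_{i,k}$-dependent pieces of $\sum_j w_{ij}y_{j,k}$ and of $\oy_k$ gives the bound $\sigma^2$ (Lemma~\ref{lem: Vy_k-oy_k, 4 nablas}). These two terms contribute $2(\alpha Ln+1)\sigma^2$ to $M_\sigma$; if you assume they vanish, inequality~(\ref{Third_Main_Inquality}) is simply false as an intermediate step and the stated constant $M_\sigma$ cannot be recovered. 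With these two lemmas supplied, the rest of your plan goes through exactly as in the paper.
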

\begin{remark}
	The first term on the right hand side of (\ref{error_bound_ultimate}) can be interpreted as the error caused by stochastic gradients only, since it does not depend on the network topology. The second term as well as the bound in (\ref{consensus_error_bound_ultimate}) are network dependent and increase with $\rho_w$ (larger $\rho_w$ indicates worse network connectivity).
	
	In light of (\ref{error_bound_ultimate}) and (\ref{consensus_error_bound_ultimate}),
	\begin{equation*}
	\limsup_{k\rightarrow\infty}\bE[\|\ox_k-x^*\|^2]=\alpha\mathcal{O}\left(\frac{\sigma^2}{\mu n}\right)+\alpha^2\mathcal{O}\left(\frac{ L^2\sigma^2}{\mu^2}\right),
	\end{equation*}
	and
	\begin{equation*}
	\limsup_{k\rightarrow\infty}\frac{1}{n}\bE[\|\mx_k-\mathbf{1}\ox_k\|^2]=\alpha^4\mathcal{O}\left(\frac{ L^3\sigma^2}{\mu n}\right)+\alpha^2\mathcal{O}\left(\sigma^2\right).
	\end{equation*}
	Let $(1/n)\bE[\|\mx_k-\mathbf{1}x^*\|^2]$ measure the average quality of solutions obtained by all the agents. We have
		\begin{equation*}
	\limsup_{k\rightarrow\infty}\frac{1}{n}\bE[\|\mx_k-\mathbf{1}x^*\|^2]=\alpha\mathcal{O}\left(\frac{\sigma^2}{\mu n}\right)+\alpha^2\mathcal{O}\left(\frac{ L^2\sigma^2}{\mu^2}\right),
	\end{equation*}
	which is decreasing in the network size $n$ when $\alpha$ is sufficiently small\footnote{Although $\rho_w$ is also related to the network size $n$, it only appears in the terms with high orders of $\alpha$.}.
	
	Under a centralized algorithm in the form of
	\begin{equation}
	\label{eq: centralized}
	x_{k+1}=x_k-\alpha \frac{1}{n}\sum_{i=1}^n g_i(x_k,\xi_{i,k}),\ \ k\in\mathbb{N},
	\end{equation}
	we would obtain
	\begin{equation*}
\limsup_{k\rightarrow\infty}\bE[\|x_k-x^*\|^2]=\alpha\mathcal{O}\left(\frac{\sigma^2}{\mu n}\right).
	\end{equation*}
	It can be seen that the distributed stochastic gradient tracking method (\ref{eq: x_i,k}) is comparable with the centralized algorithm (\ref{eq: centralized}) in their ultimate error bounds (up to constant factors) with sufficiently small step sizes.
\end{remark}

\begin{corollary}
	\label{cor: speed}
	Under the conditions in Theorem {\ref{Theorem}}. Suppose in addition\footnote{This condition is weaker than (\ref{alpha_ultimate_bound}) in most cases.} 
	\begin{equation}
		\label{alpha condition corollary}
	\alpha\le \frac{(\Gamma+1)}{\Gamma}\frac{(1-\rho_w^2)}{8\mu}.
	\end{equation}
    Then
	\begin{equation*}
	\rho_A\le 1-\left(\frac{\Gamma-1}{\Gamma+1}\right)\alpha\mu.
	\end{equation*}
\end{corollary}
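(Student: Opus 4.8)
The plan is to exploit the nonnegativity of $A$ together with a Perron--Frobenius comparison argument. First I would verify that, under the step size restriction (\ref{alpha_ultimate_bound}) — and in particular that $\beta=\frac{1-\rho_w^2}{2\rho_w^2}-4\alpha L-2\alpha^2 L^2>0$, so that the entry $\frac{1}{\beta}+2$ is positive — every entry of $A$ is nonnegative. Since $A$ is then entrywise nonnegative, its spectral radius $\rho_A$ is a genuine eigenvalue (the Perron root), and I can bound it from above through the Collatz--Wielandt estimate: for any strictly positive vector $v=(v_1,v_2,v_3)^{\T}>0$ one has $\rho_A\le\max_i (Av)_i/v_i$. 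Hence it suffices to exhibit a positive $v$ with $Av\le\lambda v$ componentwise, where $\lambda=1-\frac{\Gamma-1}{\Gamma+1}\alpha\mu$.

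Writing out $Av\le\lambda v$ yields three scalar inequalities, which I would treat from the top down. The first row reduces to a lower bound on $v_1/v_2$: since $\lambda-(1-\alpha\mu)=\frac{2}{\Gamma+1}\alpha\mu>0$, it forces $v_1\gtrsim\frac{(\Gamma+1)L^2(1+\alpha\mu)}{2\mu^2 n}\,v_2$. The second row involves the gap $\lambda-\frac{1}{2}(1+\rho_w^2)=\frac{1}{2}(1-\rho_w^2)-\frac{\Gamma-1}{\Gamma+1}\alpha\mu$, and this is exactly where the extra hypothesis (\ref{alpha condition corollary}) enters: it guarantees $\frac{\Gamma-1}{\Gamma+1}\alpha\mu\le\frac{1}{8}(1-\rho_w^2)$, so the gap stays bounded below by $\frac{3}{8}(1-\rho_w^2)>0$, and the row becomes a lower bound on $v_2/v_3$. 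With these two ratios fixed, I would normalize $v_3=1$ and take $v_2,v_1$ as the smallest values compatible with the first two rows, so that the entire problem reduces to verifying the third inequality.

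That third row,
\[
2\alpha nL^3\,v_1+\Big[\big(\tfrac{1}{\beta}+2\big)\|W-I\|^2 L^2+3\alpha L^3\Big]v_2+\tfrac{1}{2}(1+\rho_w^2)\le\lambda,
\]
is where I expect the main obstacle. The first term feeds the optimality gap back into the tracking error with a gain $2\alpha nL^3$ that grows in $n$, while the first row forced $v_1$ to scale like $1/n$; these two $n$-dependences must cancel, and what survives is essentially the gain around the cycle $1\to2\to3\to1$ in the dependency graph of $A$. This is precisely why (\ref{alpha_ultimate_bound}) carries three separate budgets: the first controls the $(2,3)$ coupling against the row-two gap, the second controls the $\|W-I\|^2$ term, and the third — whose cube root and factor $\frac{\mu^2}{L^2}\frac{\Gamma-1}{\Gamma(\Gamma+1)}$ mirror the product of the three loop gains — controls the overall feedback. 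I anticipate the verification coming down to substituting the chosen $v_1,v_2$, collecting terms into the three budgets carved out by (\ref{alpha_ultimate_bound}), and checking each is met; the bookkeeping of constants, especially the interplay among $\beta$, $\|W-I\|$, and $\rho_w$, is where care is needed, but no idea beyond the loop-gain balance is required. Once all three inequalities hold, Collatz--Wielandt delivers $\rho_A\le\lambda=1-\frac{\Gamma-1}{\Gamma+1}\alpha\mu$, completing the proof.
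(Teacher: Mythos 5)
Your plan is sound and, once the test vector is pinned down, it is computationally the same proof as the paper's, just packaged differently. The paper argues through the characteristic polynomial: it lower-bounds $\det(\lambda I-A)$ using the relations (\ref{alpha,beta}) and (\ref{alpha last condition}) already established in the proof of Theorem~\ref{Theorem}, evaluates at $\lambda=1-\epsilon$, shows that (\ref{alpha condition corollary}) yields $\det(\tilde{\lambda}I-A)\ge 0$ for $\tilde{\lambda}=1-\frac{\Gamma-1}{\Gamma+1}\alpha\mu$, and then converts this sign information into $\rho_A\le\tilde{\lambda}$ via the root-location argument behind Lemma~\ref{lem: rho_M}. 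Your Collatz--Wielandt route replaces only that last conversion: exhibiting $v>0$ with $Av\le\tilde{\lambda}v$ gives $\rho_A\le\tilde{\lambda}$ immediately for the nonnegative matrix $A$, with no separate eigenvalue-location lemma needed --- a genuine simplification of the logical scaffolding. However, with your tight choice ($v_3=1$, $v_2=a_{23}/(\tilde{\lambda}-a_{22})$, $v_1=a_{12}v_2/(\tilde{\lambda}-a_{11})$), the third-row inequality, after clearing denominators, is exactly $a_{12}a_{23}a_{31}+a_{23}a_{32}(\tilde{\lambda}-a_{11})\le(\tilde{\lambda}-a_{11})(\tilde{\lambda}-a_{22})(\tilde{\lambda}-a_{33})$, i.e.\ $\det(\tilde{\lambda}I-A)\ge 0$; so the ``bookkeeping'' you defer is precisely the computation the paper performs. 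Two remarks on that remaining step. First, you need not re-derive the three budgets of (\ref{alpha_ultimate_bound}): the proof of Theorem~\ref{Theorem} already shows they imply $a_{23}a_{32}\le\frac{1}{\Gamma}(1-a_{22})(1-a_{33})$ and $a_{12}a_{23}a_{31}\le\frac{\Gamma-1}{\Gamma(\Gamma+1)}(1-a_{11})(1-a_{22})(1-a_{33})$, and these two bounds plus (\ref{alpha condition corollary}) are all the third row requires. Second, a caution on constants: your lower bound of $\frac{3}{8}(1-\rho_w^2)$ on the row-two gap $\tilde{\lambda}-a_{22}$ certifies positivity of $v$ but is not strong enough for the final inequality when $\Gamma$ is close to $1$ (the verification reduces to $(\tilde{\lambda}-a_{22})^2\ge\frac{\Gamma+1}{2\Gamma}\cdot\frac{(1-\rho_w^2)^2}{4}$, whose right side approaches $\frac{(1-\rho_w^2)^2}{4}$ as $\Gamma\to 1$); one must retain the $\Gamma$-dependent bound $\tilde{\lambda}-a_{22}\ge\frac{3\Gamma+1}{8\Gamma}(1-\rho_w^2)$ coming from (\ref{alpha condition corollary}), for which the needed inequality collapses to $(\Gamma-1)^2\ge 0$. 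Since you use the tight $v_2$ rather than the $\frac{3}{8}$ surrogate, this is consistent with your plan, but it is exactly the place where sloppiness in the constants would sink the argument.
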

\begin{remark}
	Corollary \ref{cor: speed} implies that, for sufficiently small step sizes, 
	the distributed gradient tracking method has a comparable convergence speed to that of a centralized scheme (in which case the linear rate is $\mathcal{O}(1-2\alpha\mu)^k$).
\end{remark}
%A possible choice is as follows:
%\begin{eqnarray}
%x_{i,k+1}
%= & x_{i,k}-\tg\sum_{j=1}^{n}a_{ij}(x_{i,k}-x_{j,k})-\alpha y_{i,k}, \\
%y_{i,k+1}= & \sum_{j=1}^{n}v_{ij}y_{j,k}+g(x_{i,k},\xi_{i,k+1})-g(x_{i,k},\xi_{i,k}),
%\end{eqnarray}
%where $a_{ij}\in\{0,1\},\forall i,j$, and $A=[a_{ij}]$ denotes the adjacency matrix. In this case, $W=I-\tg \mathcal{L}$ where $\mathcal{L}$ stands for the graph Laplacian.
%Step size $\alpha$ and coefficient $\tg$ are to be set later.

\section{Analysis}
\label{sec:cohesion}

In this section, we prove Theorem \ref{Theorem} by studying the evolution of $\bE[\|\ox_k-x^*\|^2]$, $\bE[\|\mx_k-\mathbf{1}\ox_k\|^2]$ and $\bE[\|\my_k-\mathbf{1}\oy_k\|^2]$. Our strategy is to bound the three expressions in terms of linear combinations of their past values, in which way we establish a linear system of inequalities. This approach is different from those employed in \cite{qu2017harnessing,nedic2017achieving}, where the analyses pertain to the examination of $\|\ox_k-x^*\|$, $\|\mx_k-\mathbf{1}\ox_k\|$ and $\|\my_k-\mathbf{1}\oy_k\|$. Such distinction is due to the  stochastic gradients $g_i(x_{i,k},\xi_{i,k})$ whose variances play a crucial role in deriving the main inequalities.

We first introduce some lemmas that will be used later in the analysis. Denote by $\mathcal{H}_k$ the history sequence $\{\mx_0,\boldsymbol{\xi}_0,\my_0,\ldots,\mx_{k-1},\boldsymbol{\xi}_{k-1},\my_{k-1},\mx_k\}$, and define $\bE[\cdot \mid\mathcal{H}_k]$ as the conditional expectation given $\mathcal{H}_k$.
\begin{lemma}
	\label{lem: oy_k-h_k}
	Under Assumption \ref{asp: gradient samples},
	\begin{align}
	\bE\left[\|\oy_k-h(\mx_k)\|^2\mid\mathcal{H}_k\right] \le \frac{\sigma^2}{n}.
	\end{align}
\end{lemma}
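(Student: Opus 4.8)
The plan is to reduce the claim to a standard variance bound for an average of independent, conditionally unbiased noise terms. First I would use the induction fact established just above, namely $\oy_k = \frac{1}{n}\mathbf{1}^{\T}G(\mx_k,\boldsymbol{\xi}_k)$, together with the definition $h(\mx_k)=\frac{1}{n}\mathbf{1}^{\T}\nabla F(\mx_k)$ from (\ref{def: h}), to write
\[
\oy_k - h(\mx_k) = \frac{1}{n}\sum_{i=1}^n\bigl(g_i(x_{i,k},\xi_{i,k})-\nabla f_i(x_{i,k})\bigr) =: \frac{1}{n}\sum_{i=1}^n e_{i,k}.
\]
The key observation is that conditioning on $\mathcal{H}_k$ fixes every $x_{i,k}$ (since $\mx_k$ is the last entry of the history), while the samples $\xi_{i,k}$ remain random, mutually independent across $i$ by Assumption \ref{asp: gradient samples}, and independent of the past.

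Next I would expand the squared Frobenius norm of this average into its $n^2$ pairwise terms:
\[
\bE\bigl[\|\oy_k-h(\mx_k)\|^2\mid\mathcal{H}_k\bigr] = \frac{1}{n^2}\sum_{i=1}^n\sum_{j=1}^n\bE\bigl[\langle e_{i,k},e_{j,k}\rangle\mid\mathcal{H}_k\bigr].
\]
For the off-diagonal terms $i\ne j$, independence of $\xi_{i,k}$ and $\xi_{j,k}$ lets the conditional expectation factor, and each factor $\bE[e_{i,k}\mid\mathcal{H}_k]=0$ by the unbiasedness part of Assumption \ref{asp: gradient samples}; hence every cross term vanishes. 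For the diagonal terms, $\bE[\|e_{i,k}\|^2\mid\mathcal{H}_k]\le\sigma^2$ by the variance bound in Assumption \ref{asp: gradient samples}. Summing the $n$ surviving diagonal contributions yields $\frac{1}{n^2}\cdot n\sigma^2 = \frac{\sigma^2}{n}$, which is exactly the asserted bound.

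The calculation is short, so there is no genuine technical obstacle; the only point that requires care is the bookkeeping of the conditioning. One must verify that $\mathcal{H}_k$ indeed determines all the $x_{i,k}$ while leaving the $\xi_{i,k}$ fresh, so that both hypotheses of Assumption \ref{asp: gradient samples} apply conditionally. It is precisely the \emph{combination} of conditional unbiasedness and cross-agent independence (not either property alone) that eliminates the cross terms and produces the $1/n$ variance reduction; this $1/n$ factor is what ultimately feeds the network-size-decreasing error bound in the main theorem.
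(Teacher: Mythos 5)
Your proof is correct and follows essentially the same route as the paper's: the paper's one-line displayed equality is precisely the expansion you carry out, with the cross terms silently dropped by the same combination of conditional unbiasedness and independence across agents, and the diagonal terms bounded by $\sigma^2$. Your version merely makes explicit the bookkeeping (that $\mathcal{H}_k$ fixes $\mx_k$ while $\boldsymbol{\xi}_k$ remains fresh) that the paper leaves implicit.
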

\begin{proof}
	By the definitions of $\oy_k$ and $h(\mx_k)$,
	\begin{equation*}
		\bE\left[\|\oy_k-h(\mx_k)\|^2\mid\mathcal{H}_k\right]\\
		=	\frac{1}{n^2}\sum_{i=1}^n\bE\left[\|g_i(x_{i,k},\xi_{i,k})-\nabla f_i(x_{i,k})\|^2\vert\mathcal{H}_k\right]\le \frac{\sigma^2}{n}.
		\end{equation*}
\end{proof}
\begin{lemma}
	\label{lem: strong_convexity}
	Under Assumption \ref{asp: strconvexity},
	\begin{align}
	\| \nabla F(\ox_k)-h(\mx_k)\| \le \frac{L}{\sqrt{n}}\|\mx_k-\mathbf{1}\ox_k\|.
	\end{align}
	Suppose in addition $\alpha<2/(\mu+L)$. Then
	\begin{equation*}
	\|x-\alpha\nabla f(x)-x^*\|\le (1-\alpha \mu)\|x-x^*\|,\, \forall x\in\mathbb{R}^p.
	\end{equation*}
\end{lemma}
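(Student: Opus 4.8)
The plan is to handle the two assertions independently: the first is a direct consequence of the $L$-Lipschitz continuity of the $\nabla f_i$, while the second is the classical contraction estimate for a single (centralized) gradient step applied to $f=\frac1n\sum_{i=1}^n f_i$. For the first inequality I read $\nabla F(\ox_k)$ as $\nabla f(\ox_k)=\frac1n\sum_{i=1}^n\nabla f_i(\ox_k)=h(\mathbf 1\ox_k)$, so that by the definition of $h$ in \eqref{def: h},
\[
\nabla f(\ox_k)-h(\mx_k)=\frac1n\sum_{i=1}^n\bigl(\nabla f_i(\ox_k)-\nabla f_i(x_{i,k})\bigr).
\]
Applying the triangle inequality and the $L$-Lipschitz bound of Assumption \ref{asp: strconvexity} termwise gives $\|\nabla f(\ox_k)-h(\mx_k)\|\le \frac{L}{n}\sum_{i=1}^n\|\ox_k-x_{i,k}\|$, and the only remaining step is to convert this sum of norms into a Frobenius norm: by Cauchy--Schwarz, $\sum_{i=1}^n\|\ox_k-x_{i,k}\|\le\sqrt n\,\bigl(\sum_{i=1}^n\|\ox_k-x_{i,k}\|^2\bigr)^{1/2}=\sqrt n\,\|\mx_k-\mathbf 1\ox_k\|$, which produces exactly the factor $L/\sqrt n$. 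This part carries no subtlety.

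For the second inequality, observe that $f$ inherits $\mu$-strong convexity and $L$-smoothness as an average of functions with these properties, and that $\nabla f(x^*)=0$. The natural starting point is the expansion
\[
\|x-\alpha\nabla f(x)-x^*\|^2=\|x-x^*\|^2-2\alpha\langle\nabla f(x)-\nabla f(x^*),x-x^*\rangle+\alpha^2\|\nabla f(x)-\nabla f(x^*)\|^2 ,
\]
after which I would bound the cross term using the standard combined strong-convexity/smoothness estimate (Nesterov),
\[
\langle\nabla f(x)-\nabla f(x^*),x-x^*\rangle\ge\frac{\mu L}{\mu+L}\|x-x^*\|^2+\frac{1}{\mu+L}\|\nabla f(x)-\nabla f(x^*)\|^2 .
\]

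The main obstacle, and the reason the naive route fails, is obtaining the \emph{tight} factor $(1-\alpha\mu)$ rather than the looser $\sqrt{1-2\alpha\mu+\alpha^2L^2}$ that results from bounding the two terms separately (strong convexity for the inner product, Lipschitz for the gradient norm) and which strictly exceeds $(1-\alpha\mu)$ whenever $L>\mu$. The fix is to keep the gradient-norm term through the substitution: after inserting the combined inequality, its coefficient becomes $\alpha^2-\frac{2\alpha}{\mu+L}=\alpha\bigl(\alpha-\frac{2}{\mu+L}\bigr)$, which is nonpositive \emph{precisely} under the hypothesis $\alpha\le 2/(\mu+L)$. Since strong convexity together with Cauchy--Schwarz also yields the lower bound $\|\nabla f(x)-\nabla f(x^*)\|\ge\mu\|x-x^*\|$, I may replace $\|\nabla f(x)-\nabla f(x^*)\|^2$ by its smallest admissible value $\mu^2\|x-x^*\|^2$ (legitimate because the coefficient is nonpositive, so this only increases the bound). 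A short algebraic simplification then collapses the resulting coefficient of $\|x-x^*\|^2$ to exactly $1-2\alpha\mu+\alpha^2\mu^2=(1-\alpha\mu)^2$, and taking square roots finishes the proof. The delicate point is thus purely this bookkeeping — exploiting both the nonpositive sign of the gradient-norm coefficient and the matching lower bound on that norm — rather than any deep estimate.
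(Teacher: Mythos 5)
Your proof is correct: the first inequality via termwise Lipschitz bounds plus Cauchy--Schwarz, and the second via the combined strong-convexity/smoothness (co-coercivity) estimate together with the lower bound $\|\nabla f(x)-\nabla f(x^*)\|\ge\mu\|x-x^*\|$ applied to the nonpositive coefficient $\alpha(\alpha-\tfrac{2}{\mu+L})$, collapsing to $(1-\alpha\mu)^2$. The paper does not prove this lemma itself but defers to Lemma~10 of \cite{qu2017harnessing}, whose proof is exactly this standard argument, so your route matches.
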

\begin{proof}
	See  \cite{qu2017harnessing} Lemma 10 for reference.
\end{proof}

In the following lemma, we establish bounds on $\|\mx_{k+1}-\mathbf{1}\ox_{k+1}\|^2$ and on the conditional expectations of $\|\ox_{k+1}-x^*\|^2$ and $\|\my_{k+1}-\mathbf{1}\oy_{k+1}\|^2$, respectively.
\begin{lemma}
	\label{lem: Main_Inequalities}
	Suppose Assumptions \ref{asp: gradient samples}-\ref{asp: strconvexity} hold and $\alpha<2/(\mu+L)$. We have the following inequalities:
\begin{equation}
		\label{First_Main_Inequality}
	\bE[\|\ox_{k+1}-x^*\|^2\mid \mathcal{H}_k]
	\le \left(1-\alpha\mu\right)\|\ox_k-x^*\|^2\\
	+\frac{\alpha L^2}{\mu n}\left(1+\alpha\mu\right)\|  \mx_k-\mathbf{1}\ox_k\|^2
	+\frac{\alpha^2\sigma^2}{n},
\end{equation}
	\begin{equation}
\label{Second_Main_Inequality}
\|\mx_{k+1}-\mathbf{1}\ox_{k+1}\|^2
\le \frac{(1+\rho_w^2)}{2}\|\mx_k-\mathbf{1}\ox_k\|^2\\+\alpha^2\frac{(1+\rho_w^2)\rho_w^2}{(1-\rho_w^2)}\|\my_{k}-\mathbf{1}\oy_k\|^2,
\end{equation}
and for any $\beta>0$,
\begin{multline}
\label{Third_Main_Inquality}
\bE[\|\my_{k+1}-\mathbf{1}\oy_{k+1}\|^2\mid \mathcal{H}_k]
\le \left(1+4\alpha L+2\alpha^2 L^2+\beta\right)\rho_w^2\bE[\|\my_{k}-\mathbf{1}\oy_{k}\|^2\mid \mathcal{H}_k]\\
+\left(\frac{1}{\beta}\|W-I\|^2 L^2+2\|W-I\|^2 L^2+3\alpha L^3\right)\|\mx_k-\mathbf{1}\ox_k\|^2+2\alpha nL^3\|\ox_k-x^*\|^2+M_{\sigma}.
\end{multline}
\end{lemma}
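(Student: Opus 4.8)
The plan is to prove the three inequalities one at a time, in each case reducing to the averaging identities for $\ox_k,\oy_k$, the contraction in Lemma~\ref{lem: spectral norm}, the noise control in Lemma~\ref{lem: oy_k-h_k}, and a few applications of Young's inequality with parameters tuned to reproduce the stated constants. The key structural fact about the filtration is that $\mathcal{H}_k$ contains $\mx_k$ but neither $\boldsymbol{\xi}_k$ nor $\my_k$, so $\oy_k=\frac1n\mathbf{1}^{\T}G(\mx_k,\boldsymbol{\xi}_k)$ is already random given $\mathcal{H}_k$, with $\bE[\oy_k\mid\mathcal{H}_k]=h(\mx_k)$ and, by Lemma~\ref{lem: oy_k-h_k}, conditional variance at most $\sigma^2/n$.

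For (\ref{First_Main_Inequality}) I would start from $\ox_{k+1}=\ox_k-\alpha\oy_k$, which follows from $\mathbf{1}^{\T}W=\mathbf{1}^{\T}$, and split $\ox_{k+1}-x^*=\bigl(\ox_k-\alpha\nabla f(\ox_k)-x^*\bigr)-\alpha\bigl(h(\mx_k)-\nabla f(\ox_k)\bigr)-\alpha\bigl(\oy_k-h(\mx_k)\bigr)$. Given $\mathcal{H}_k$ the last bracket has mean zero, so its cross terms drop and it contributes exactly $\alpha^2\sigma^2/n$ by Lemma~\ref{lem: oy_k-h_k}; the first bracket is contracted by $(1-\alpha\mu)$ via the second part of Lemma~\ref{lem: strong_convexity}, the second bracket is at most $\frac{L}{\sqrt n}\|\mx_k-\mathbf{1}\ox_k\|$ via its first part, and combining the two deterministic brackets by Young's inequality at parameter $\alpha\mu$ uses $(1+\alpha\mu)(1-\alpha\mu)^2\le 1-\alpha\mu$ to give exactly $\frac{\alpha L^2(1+\alpha\mu)}{\mu n}\|\mx_k-\mathbf{1}\ox_k\|^2$. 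The pathwise bound (\ref{Second_Main_Inequality}) is easier: write $\mx_{k+1}-\mathbf{1}\ox_{k+1}=(W\mx_k-\mathbf{1}\ox_k)-\alpha(W\my_k-\mathbf{1}\oy_k)$, bound each piece by Lemma~\ref{lem: spectral norm} (both have zero row-average) to get $\rho_w\|\mx_k-\mathbf{1}\ox_k\|+\alpha\rho_w\|\my_k-\mathbf{1}\oy_k\|$, square, and split by Young's inequality at parameter $\frac{1-\rho_w^2}{2\rho_w^2}$, which returns $\frac{1+\rho_w^2}{2}$ and $\alpha^2\frac{(1+\rho_w^2)\rho_w^2}{1-\rho_w^2}$.

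The substantive work is (\ref{Third_Main_Inquality}). Writing $\tilde W:=W-\frac1n\mathbf{1}\mathbf{1}^{\T}$, $P:=I-\frac1n\mathbf{1}\mathbf{1}^{\T}$, and $\me_j:=G(\mx_j,\boldsymbol{\xi}_j)-\nabla F(\mx_j)$, the recursion and $\mathbf{1}^{\T}W=\mathbf{1}^{\T}$ give $\my_{k+1}-\mathbf{1}\oy_{k+1}=\tilde W\my_k+P\bigl[\nabla F(\mx_{k+1})-\nabla F(\mx_k)\bigr]+P\me_{k+1}-P\me_k$. I would take expectations in two stages. First condition on $\mathcal{F}_k:=\sigma(\mathcal{H}_k,\boldsymbol{\xi}_k)$: every term but $P\me_{k+1}$ is $\mathcal{F}_k$-measurable and $\bE[\me_{k+1}\mid\mathcal{F}_k]=0$, so the fresh noise decouples and contributes only $\bE[\|P\me_{k+1}\|^2\mid\mathcal{F}_k]\le n\sigma^2$, leaving $\bE\bigl[\|\tilde W\my_k+P(\nabla F(\mx_{k+1})-\nabla F(\mx_k))-P\me_k\|^2\mid\mathcal{H}_k\bigr]$ to bound. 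For that I would use $\|\nabla F(\mx_{k+1})-\nabla F(\mx_k)\|\le L\|\mx_{k+1}-\mx_k\|$ together with the expansion $\mx_{k+1}-\mx_k=(W-I)(\mx_k-\mathbf{1}\ox_k)-\alpha W(\my_k-\mathbf{1}\oy_k)-\alpha\mathbf{1}\oy_k$. The first displacement piece supplies the $\|W-I\|^2L^2$ coefficient of $\|\mx_k-\mathbf{1}\ox_k\|^2$; the second, combined with $\tilde W\my_k$ through $\|\alpha W(\my_k-\mathbf{1}\oy_k)\|\le\alpha\rho_w\|\my_k-\mathbf{1}\oy_k\|$, produces the $4\alpha L+2\alpha^2L^2$ corrections to the $\|\my_k-\mathbf{1}\oy_k\|^2$ coefficient; and the third, via $\|\mathbf{1}\oy_k\|^2=n\|\oy_k\|^2$ with $\oy_k=h(\mx_k)+(\oy_k-h(\mx_k))$ and the bounds $\|h(\mx_k)\|\le\frac{L}{\sqrt n}\|\mx_k-\mathbf{1}\ox_k\|+L\|\ox_k-x^*\|$ and $\bE[\|\oy_k-h(\mx_k)\|^2\mid\mathcal{H}_k]\le\sigma^2/n$, produces the $2\alpha nL^3\|\ox_k-x^*\|^2$ term plus further $\|\mx_k-\mathbf{1}\ox_k\|^2$ and $\sigma^2$ contributions. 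A single Young's inequality with the free parameter $\beta$ isolates $\|\tilde W\my_k\|^2\le\rho_w^2\|\my_k-\mathbf{1}\oy_k\|^2$ with prefactor $1+4\alpha L+2\alpha^2L^2+\beta$, while the secondary splits of the displacement use parameters proportional to $\alpha L$, and all stochastic pieces are gathered into $M_\sigma$.

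The main obstacle is exactly this last estimate. Because $\boldsymbol{\xi}_k$ enters $\my_k$, $\mx_{k+1}$, and $\me_k$ at once, the three terms $\tilde W\my_k$, the gradient difference, and $P\me_k$ remain correlated after conditioning on $\mathcal{H}_k$, so no further cross terms vanish and every one must be routed through a Young's inequality. The delicate part is organizing this bookkeeping so that the several auxiliary parameters collapse to the single $\beta$ appearing in the statement, and so that all the $\sigma^2$ contributions aggregate to precisely $M_\sigma=[3\alpha^2L^2+2(\alpha L+1)(n+1)]\sigma^2$; matching these constants exactly, rather than merely obtaining a bound of the same shape, is where the care is needed.
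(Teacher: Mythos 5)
Your proofs of (\ref{First_Main_Inequality}) and (\ref{Second_Main_Inequality}) are correct and essentially identical to the paper's: the same decomposition through $\nabla F(\ox_k)$, $h(\mx_k)$ and the conditionally centered noise, the same use of Lemmas \ref{lem: spectral norm}, \ref{lem: oy_k-h_k} and \ref{lem: strong_convexity}, and the same Young parameters.

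The gap is in (\ref{Third_Main_Inquality}). Your decomposition of $\my_{k+1}-\mathbf{1}\oy_{k+1}$ and the two-stage conditioning that eliminates the fresh noise $G(\mx_{k+1},\boldsymbol{\xi}_{k+1})-\nabla F(\mx_{k+1})$ match the paper. But your plan for the two surviving cross terms involving the old noise $\me_k:=G(\mx_k,\boldsymbol{\xi}_k)-\nabla F(\mx_k)$ --- that since nothing vanishes, every one ``must be routed through a Young's inequality'' --- cannot produce the stated bound, and for $\bE[\langle W\my_k-\mathbf{1}\oy_k,\me_k\rangle\mid\mathcal{H}_k]$ it fails outright. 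Young's inequality with parameter $\epsilon$ yields $\epsilon\rho_w^2\|\my_k-\mathbf{1}\oy_k\|^2+\epsilon^{-1}n\sigma^2$ up to constants: taking $\epsilon$ of order $\alpha L$ or $\beta$ makes the noise contribution blow up (and depend on $\beta$, which $M_\sigma$ does not), while taking $\epsilon$ a fixed constant adds $\epsilon\rho_w^2$ to the coefficient of $\|\my_k-\mathbf{1}\oy_k\|^2$, which is incompatible with the stated coefficient $(1+4\alpha L+2\alpha^2L^2+\beta)\rho_w^2$ and, in the regime of Theorem \ref{Theorem} where that coefficient must equal $(1+\rho_w^2)/2<1$, destroys the contraction. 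Likewise, Cauchy--Schwarz on $\bE[\langle\nabla F(\mx_{k+1}),\me_k\rangle\mid\mathcal{H}_k]$ drags in $\|\nabla F(\mx_{k+1})\|$, which is not controlled by the three tracked quantities. The missing ideas are precisely the paper's Lemmas \ref{lem: 3 nablas} and \ref{lem: Vy_k-oy_k, 4 nablas}, which \emph{compute} these covariances instead of bounding them by products of norms: because the $\xi_{i,k}$ are independent across agents, the only part of row $i$ of $W\my_k-\mathbf{1}\oy_k$ correlated with $\nabla f_i(x_{i,k})-g_i(x_{i,k},\xi_{i,k})$ is $(w_{ii}-\tfrac1n)g_i(x_{i,k},\xi_{i,k})$, whose total contribution is at most $\sigma^2$ (the $w_{ii}$ part even has a favorable sign); and $x_{i,k+1}$ depends on $\xi_{i,k}$ only through an $O(\alpha)$ perturbation, so Lipschitz continuity of $\nabla f_i$ bounds the second covariance by $\alpha Ln\sigma^2$. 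Without these two covariance computations you cannot reach (\ref{Third_Main_Inquality}) with the stated constants, nor with any constants strong enough to support Theorem \ref{Theorem}.
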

\begin{proof}
	See Appendix \ref{proof lem: Main_Inequalities}.
	\end{proof}

\subsection{Proof of Theorem \ref{Theorem}}
	Taking full expectation on both sides of (\ref{First_Main_Inequality}), (\ref{Second_Main_Inequality}) and (\ref{Third_Main_Inquality}), we obtain the following linear system of inequalities
\begin{eqnarray}
\label{linear_system}
\begin{bmatrix}
\bE[\|\ox_{k+1}-x^*\|^2]\\
\bE[\|\mx_{k+1}-\mathbf{1}\ox_{k+1}\|^2]\\
\bE[\|\my_{k+1}-\mathbf{1}\oy_{k+1}\|^2]
\end{bmatrix}
\le 
A\begin{bmatrix}
\bE[\|\ox_{k}-x^*\|^2]\\
\bE[\|\mx_{k}-\mathbf{1}\ox_{k}\|^2]\\
\bE[\|\my_{k}-\mathbf{1}\oy_{k}\|^2]
\end{bmatrix}+\begin{bmatrix}
\frac{\alpha^2\sigma^2}{n}\\
0\\
M_{\sigma}
\end{bmatrix},
\end{eqnarray}
where the inequality is to be taken component-wise, and the entries of the matrix
$A=[a_{ij}]$ are given by
	\begin{eqnarray*}
		& \begin{bmatrix}
			a_{11}\\
			a_{21}\\
			a_{31}
		\end{bmatrix}  =  
		\begin{bmatrix}
			1-\alpha\mu\\
			0\\
			2\alpha nL^3 
		\end{bmatrix},
		\begin{bmatrix}
			a_{12}\\
			a_{22}\\
			a_{32}
		\end{bmatrix} = 
		\begin{bmatrix}
			\frac{\alpha L^2}{\mu n}(1+\alpha\mu)\\
			\frac{1}{2}(1+\rho_w^2)\\
			\left(\frac{1}{\beta}+2\right)\|W-I\|^2 L^2+3\alpha L^3
		\end{bmatrix},\\
		& \begin{bmatrix}
			a_{13}\\
			a_{23}\\
			a_{33}
		\end{bmatrix} = 
		\begin{bmatrix}
			0 \\
			\alpha^2\frac{(1+\rho_w^2)\rho_w^2}{(1-\rho_w^2)}\\
			\left(1+4\alpha L+2\alpha^2 L^2+\beta\right)\rho_w^2
		\end{bmatrix},
\end{eqnarray*}
and $M_{\sigma}$ is given in (\ref{M_sigma}).
Hence $\sup_{l\ge k}\bE[\|\ox_l-x^*\|^2]$, $\sup_{l\ge k}\bE[\|\mx_l-\mathbf{1}\ox_l\|^2]$ and $\sup_{l\ge k}\bE[\|\my_l-\mathbf{1}\oy_l\|^2]$ all converge to a neighborhood of $0$ at the linear rate $\mathcal{O}(\rho_A^k)$ if the spectral radius of $A$ satisfies $\rho_A<1$. 
The next lemma provides conditions for relation $\rho_A<1$ to hold.
\begin{lemma}
	\label{lem: rho_M}
	Let $M=[m_{ij}]\in\mathbb{R}^{3\times 3}$ be a nonnegative, irreducible matrix with 
	$m_{ii}<\lambda^*$ for some~{$\lambda^*>0$ and all $i=1,2,3.$} 
	A necessary and sufficient condition for $\rho_M<\lambda^*$ is $\text{det}(\lambda^* I-M)>0$.
\end{lemma}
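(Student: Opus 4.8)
The plan is to read the condition through the characteristic polynomial $p(\lambda):=\det(\lambda I-M)$, a monic cubic, combined with the Perron--Frobenius structure of $M$. Since $M$ is nonnegative and irreducible, $\rho_M$ is a simple eigenvalue, it equals the spectral radius, every other eigenvalue $\lambda_i$ obeys $|\lambda_i|\le\rho_M$, and $\rho_M\ge\max_i m_{ii}=:d$. I would prove the two implications of the ``iff'' separately, the forward one being routine and the converse carrying all the difficulty.

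For necessity ($\rho_M<\lambda^*\Rightarrow p(\lambda^*)>0$) I would factor $p(\lambda^*)=\prod_i(\lambda^*-\lambda_i)$: each real eigenvalue gives a factor $\lambda^*-\lambda_i>0$ because $\lambda_i\le\rho_M<\lambda^*$, while each complex-conjugate pair gives $|\lambda^*-\lambda_i|^2>0$. Hence $p(\lambda^*)>0$. This step uses only that $\rho_M$ is the spectral radius, and needs neither the diagonal bound $m_{ii}<\lambda^*$ nor irreducibility.

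For sufficiency I would argue the contrapositive: assuming $\rho_M\ge\lambda^*$, I would show $p(\lambda^*)\le 0$. Denote by $\lambda_2,\lambda_3$ the two non-Perron eigenvalues. If they form a complex-conjugate pair, then $p(\lambda^*)=(\lambda^*-\rho_M)\,|\lambda^*-\lambda_2|^2\le 0$ since $\lambda^*\le\rho_M$, and we are done. The real case $\rho_M\ge\lambda_2\ge\lambda_3$ is where the hypothesis $m_{ii}<\lambda^*$ must enter: it suffices to show $\lambda_2\le d$, for then $\lambda^*>d\ge\lambda_2\ge\lambda_3$ makes the last two factors of $p(\lambda^*)=(\lambda^*-\rho_M)(\lambda^*-\lambda_2)(\lambda^*-\lambda_3)$ positive while the first is $\le 0$. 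The key intermediate fact I would establish is $p(d)\le 0$, i.e.\ $\det(M-dI)\ge 0$: expanding this determinant along a row whose diagonal entry attains $m_{kk}=d$ (so the corresponding diagonal entry of $M-dI$ vanishes), a short computation shows every surviving term is a product of nonnegative entries of $M$, so the value is $\ge 0$. Given $p(d)\le 0$ and $d\le\rho_M$, the factorization $p(d)=(d-\rho_M)(d-\lambda_2)(d-\lambda_3)$ forces $(d-\lambda_2)(d-\lambda_3)\ge 0$. The branch $d\le\lambda_3\le\lambda_2$ would make all three eigenvalues $\ge d$, whence $3d\le\sum_i\lambda_i=\mathrm{tr}\,M=\sum_i m_{ii}\le 3d$ forces every $\lambda_i=d$ and every $m_{ii}=d$; then $M-dI$ is a nonnegative matrix with zero spectrum, hence nilpotent and therefore reducible, contradicting irreducibility. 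This leaves $\lambda_2\le d$, completing the converse.

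I expect the real case of the converse to be the main obstacle. Unlike the forward direction, $p(\lambda^*)>0$ does not on its own locate $\lambda^*$ relative to $\rho_M$, because a ``middle'' real eigenvalue $\lambda_2\in(\lambda^*,\rho_M)$ could in principle keep $p(\lambda^*)$ positive even when $\rho_M>\lambda^*$; the whole purpose of the hypotheses $m_{ii}<\lambda^*$ and irreducibility is to rule out this configuration. The cleanest route I see runs through the sign identity $p(d)\le 0$ together with the trace/nilpotency argument, and getting that determinant sign right (via the cofactor expansion at the maximal diagonal entry) is the crux of the proof.
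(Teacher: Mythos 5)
Your proof is correct, but it follows a genuinely different route from the paper's. The paper argues forward from $g(\lambda^*):=\det(\lambda^* I-M)>0$: it splits that inequality into weights $\gamma_1+\gamma_2+\gamma_3\le 1$ dominating the three off-diagonal $2$-cycles $a_{ij}a_{ji}$, uses this to show $g'(\lambda)>0$ on $(-\infty,-\lambda^*]\cup[\lambda^*,+\infty)$, checks $g(-\lambda^*)<0<g(\lambda^*)$ to conclude that \emph{all real roots} of $g$ lie in $(-\lambda^*,\lambda^*)$, and finishes by Perron--Frobenius since $\rho_M$ is a real eigenvalue; no case split on the eigenvalue configuration is needed. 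You instead prove the contrapositive of sufficiency through the eigenvalue factorization, with the structural inequality $\det(dI-M)\le 0$ at $d=\max_i m_{ii}$ (via the cofactor expansion at the vanishing diagonal entry) as the pivot, plus a trace/irreducibility argument to exclude the branch $\lambda_2,\lambda_3\ge d$. Your approach isolates a reusable fact ($p(d)\le 0$ for any nonnegative matrix, hence the two non-Perron real eigenvalues cannot straddle $d$ the wrong way), and it makes transparent exactly where the hypothesis $m_{ii}<\lambda^*$ enters; the paper's monotonicity argument is more uniform (no real/complex dichotomy) and avoids the trace step. Two cosmetic points: in the expansion of $\det(M-dI)$ the surviving terms include factors $d-m_{jj}\ge 0$, which are nonnegative but are not ``entries of $M$''; and in the excluded branch the contradiction is already immediate from $\sum_i\lambda_i>3d\ge\operatorname{tr}M$ (since $\rho_M\ge\lambda^*>d$ strictly), so the nilpotency/reducibility coda is not needed. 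Neither affects correctness.
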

\begin{proof}
	See Appendix \ref{subsec: proof lemma rho_M}.
	\end{proof}
	Let $\alpha$ and $\beta$ be such that the following relations hold\footnote{Matrix $A$ in Theorem~\ref{Theorem} 
	corresponds to such a choice of $\alpha$ and $\beta$.}.
\begin{equation}
	\label{beta}
	a_{33}=\left(1+4\alpha L+2\alpha^2 L^2+\beta\right)\rho_w^2=\frac{1+\rho_w^2}{2}<1,
	\end{equation}
	\begin{equation}
	\label{alpha,beta}
	a_{23}a_{32}=\alpha^2\frac{(1+\rho_w^2)\rho_w^2}{(1-\rho_w^2)}\left[\left(\frac{1}{\beta}+2\right)\|W-I\|^2 L^2+3\alpha L^3\right]
	\le\frac{1}{\Gamma}(1-a_{22})(1-a_{33})
	\end{equation}
for some $\Gamma>1$, and
\begin{equation}
	\label{alpha last condition}
	a_{12}a_{23}a_{31}=\frac{2\alpha^4 L^5(1+\alpha\mu)}{\mu}\frac{(1+\rho_w^2)}{(1-\rho_w^2)}\rho_w^2
	\le \frac{1}{\Gamma+1}(1-a_{11})[(1-a_{22})(1-a_{33})-a_{23}a_{32}].
	\end{equation}
Then,
\begin{multline*}
	\text{det}(I-A)=(1-a_{11})(1-a_{22})(1-a_{33})-(1-a_{11})a_{23}a_{32}-a_{12}a_{23}a_{31}\\
	\ge \frac{\Gamma}{(\Gamma+1)}(1-a_{11})[(1-a_{22})(1-a_{33})-a_{23}a_{32}]
	\ge \left(\frac{\Gamma-1}{\Gamma+1}\right)(1-a_{11})(1-a_{22})(1-a_{33})>0,
	\end{multline*}
given that $a_{11},a_{22},a_{33}<1$. In light of Lemma \ref{lem: rho_M}, $\rho_A<1$.
In addition, denoting $B:=[\frac{\alpha^2\sigma^2}{n}, 0, M_{\sigma}]^{\T}$, we get 
%	Suppose at the moment $a_{11},a_{22},a_{33}<1$ and $\rho_A<1$. In light of Lemma \ref{lem: rho_M}, $\text{det}(I-A)>0$. Then
	\begin{align}
		\label{error_bound_preliminary}
	\lim\sup_{k\rightarrow\infty}\bE[\|\ox_k-x^*\|^2]\le & [(I-A)^{-1}B]_1 \notag\\
	= & \frac{1}{\text{det}(I-A)}\left\{\left[(1-a_{22})(1-a_{33})-a_{23}a_{32}\right]\frac{\alpha^2\sigma^2}{n}+a_{12}a_{23}M_{\sigma}\right\} \notag\\
	\le & \frac{(\Gamma+1)}{\Gamma}\frac{\alpha\sigma^2}{\mu n}+\left(\frac{\Gamma+1}{\Gamma-1}\right)\frac{ a_{12}a_{23}M_{\sigma}}{(1-a_{11})(1-a_{22})(1-a_{33})}\notag\\
	= & \frac{(\Gamma+1)}{\Gamma}\frac{\alpha\sigma^2}{\mu n}+ \left(\frac{\Gamma+1}{\Gamma-1}\right)\frac{\alpha^3 L^2(1+\alpha\mu)(1+\rho_w^2)\rho_w^2M_{\sigma}}{\mu n(1-\rho_w^2)(1-a_{11})(1-a_{22})(1-a_{33})}\notag\\
	= & \frac{(\Gamma+1)}{\Gamma}\frac{\alpha\sigma^2}{\mu n}+\left(\frac{\Gamma+1}{\Gamma-1}\right)\frac{4\alpha^2 L^2(1+\alpha\mu)(1+\rho_w^2)\rho_w^2}{\mu^2 n(1-\rho_w^2)^3}M_{\sigma},
	\end{align}
and
	\begin{multline*}
	\lim\sup_{k\rightarrow\infty}\bE[\|\mx_k-\mathbf{1}\ox_k\|^2] \le [(I-A)^{-1}B]_2
	=\frac{1}{\text{det}(I-A)}\left[a_{23}a_{31}\frac{\alpha^2\sigma^2}{n}+a_{23}(1-a_{11})M_{\sigma}\right]\\
	\le \left(\frac{\Gamma+1}{\Gamma-1}\right)\frac{a_{23}}{(1-a_{11})(1-a_{22})(1-a_{33})}\left(2\alpha nL^3\frac{\alpha^2 \sigma^2}{n}+\alpha\mu M_{\sigma}\right)\\
	= \frac{4(\Gamma+1)\alpha^2 (1+\rho_w^2)\rho_w^2(2\alpha^2L^3\sigma^2+\mu M_{\sigma})}{(\Gamma-1)\mu(1-\rho_w^2)^3}.
	\end{multline*}

	We now show that (\ref{beta}), (\ref{alpha,beta}) and (\ref{alpha last condition}) are satisfied under condition (\ref{alpha_ultimate_bound}).  By (\ref{beta}) and (\ref{alpha,beta}), it follows that
	\begin{equation*}
		\beta=\frac{1-\rho_w^2}{2\rho_w^2}-4\alpha L-2\alpha^2 L^2>0,
		\end{equation*}
	and
		\begin{equation*}
		\alpha^2\frac{(1+\rho_w^2)\rho_w^2}{(1-\rho_w^2)}\left[\left(\frac{1}{\beta}+2\right)\|W-I\|^2 L^2+3\alpha L^3\right]\le\frac{(1-\rho_w^2)^2}{4\Gamma}.
		\end{equation*}
		Since by (\ref{alpha_ultimate_bound}) we have
		\begin{equation*}
			\alpha\le \frac{1-\rho_w^2}{12\rho_w^2 L},\qquad
			%\end{equation*}}\normalsize
		%{\small\begin{equation*}
		\beta\ge \frac{1-\rho_w^2}{8\rho_w^2}>0,
		\end{equation*}
		we only need to show that
		\begin{equation*}
		\alpha^2\left[\frac{(2+6\rho_w^2)}{(1-\rho_w^2)}\|W-I\|^2 L^2+\frac{(1-\rho_w^2)}{4\rho_w^2}L^2\right]\le\frac{(1-\rho_w^2)^3}{4\Gamma(1+\rho_w^2)\rho_w^2}.
		\end{equation*}
		The preceding inequality is equivalent to
		\begin{equation*}
		\alpha\le \frac{(1-\rho_w^2)^2}{L\sqrt{\Gamma(1+\rho_w^2)}\sqrt{4\rho_w^2(2+6\rho_w^2)\|W-I\|^2+(1-\rho_w^2)^2}},
		\end{equation*}
	implying that it is sufficient to have
	\begin{equation*}
	\alpha\le\frac{(1-\rho_w^2)^2}{2\sqrt{\Gamma}L\max(6\rho_w\|W-I\|,1-\rho_w^2)}.
	\end{equation*}
		To see that relation (\ref{alpha last condition}) holds, consider a stronger condition
		\begin{equation*}
		\frac{2\alpha^4 L^5(1+\alpha\mu)}{\mu}\frac{(1+\rho_w^2)}{(1-\rho_w^2)}\rho_w^2
		\le \frac{(\Gamma-1)}{\Gamma(\Gamma+1)}(1-a_{11})(1-a_{22})(1-a_{33}),
		\end{equation*}
		or equivalently,
		\begin{equation*}
		\frac{2\alpha^3 L^5(1+\alpha\mu)}{\mu^2}\frac{(1+\rho_w^2)}{(1-\rho_w^2)}\rho_w^2 \le \frac{(\Gamma-1)}{4\Gamma(\Gamma+1)}(1-\rho_w^2)^2.
		\end{equation*}
		It suffices that
		\begin{equation}
		\alpha\le \frac{(1-\rho_w^2)}{3\rho_w^{2/3}L}\left[\frac{\mu^2}{L^2}\frac{(\Gamma-1)}{\Gamma(\Gamma+1)}\right]^{1/3}.
		\end{equation}

\subsection{Proof of Corollary \ref{cor: speed}}
We derive an upper bound of $\rho_A$ under condition (\ref{alpha_ultimate_bound}) and (\ref{alpha condition corollary}). Note that the characteristic function of $A$ is given by
\begin{equation*}
\text{det}(\lambda I-A)=(\lambda-a_{11})(\lambda-a_{22})(\lambda-a_{33})
-(\lambda-a_{11})a_{23}a_{32}-a_{12}a_{23}a_{31}.
\end{equation*}
Since $\text{det}(I-A)> 0$ and $\text{det}(\max\{a_{11},a_{22},a_{33}\} I-A)<0$, $\rho_A\in(\max\{a_{11},a_{22},a_{33}\},1)$. By (\ref{alpha,beta}) and (\ref{alpha last condition}),
\begin{multline*}
\text{det}(\lambda I-A)
\ge (\lambda-a_{11})(\lambda-a_{22})(\lambda-a_{33})-(\lambda-a_{11})a_{23}a_{32}-\frac{1}{\Gamma+1}(1-a_{11})[(1-a_{22})(1-a_{33})-a_{23}a_{32}]\\
\ge (\lambda-a_{11})(\lambda-a_{22})(\lambda-a_{33})-\frac{1}{\Gamma}(\lambda-a_{11})(1-a_{22})(1-a_{33})
-\frac{(\Gamma-1)}{\Gamma(\Gamma+1)}(1-a_{11})(1-a_{22})(1-a_{33}).
\end{multline*}
Suppose $\lambda=1-\epsilon$ for some $\epsilon\in(0,\alpha\mu)$, satisfying
\begin{equation*}
\text{det}(\lambda I-A)
\ge \frac{1}{4}(\alpha\mu-\epsilon)\left(1-\rho_w^2-2\epsilon\right)^2
-\frac{1}{4\Gamma}(\alpha\mu-\epsilon)(1-\rho_w^2)^2-\frac{(\Gamma-1)\alpha\mu}{4\Gamma(\Gamma+1)}(1-\rho_w^2)^2\ge 0.
\end{equation*}
Under (\ref{alpha condition corollary}), it suffices that
\begin{equation*}
\epsilon\le \left(\frac{\Gamma-1}{\Gamma+1}\right)\alpha\mu.
\end{equation*}
Denote
\begin{equation*}
\tilde{\lambda}=1-\left(\frac{\Gamma-1}{\Gamma+1}\right)\alpha\mu.
\end{equation*}
Then $\text{det}(\tilde{\lambda} I-A)\ge 0$ so that $\rho_A\le \tilde{\lambda}$.

\section{Numerical Example}
\label{sec: simulation}

In this section, we provide a numerical example to illustrate our theoretic findings. 
Consider the \emph{on-line} Ridge regression problem, i.e.,
\begin{equation}
\label{Ridge Regression}
f(x):=\frac{1}{n}\sum_{i=1}^n\mathbb{E}_{u_i,v_i}\left[\left(u_i^{\T} x-v_i\right)^2+\rho\|x\|^2\right].
\end{equation}
where $\rho>0$ ia a penalty parameter.
For each agent $i$, samples in the form of $(u_i,v_i)$ are gathered continuously with $u_i\in\mathbb{R}^p$ representing the features and $v_i\in\mathbb{R}$ being the observed outputs. We assume that each $u_i\in[-1,1]^p$ is uniformly distributed, and $v_i$ is drawn according to $v_i=u_i^{\T} \tilde{x}_i+\varepsilon_i$. Here $\tilde{x}_i\in[0.4,0.6]^p$ is a predefined, (uniformly) randomly generated parameter, and $\varepsilon_i$ are independent Gaussian noises with mean $0$ and variance $0.25$.
Given a pair $(u_i,v_i)$, agent $i$ can calculate an estimated gradient of $f_i(x)$:
\begin{equation}
g_i(x,u_i,v_i)=2(u_i^{\T}x -v_i)u_i+2\rho x,
\end{equation}
which is unbiased.
Notice that the Hessian matrix of $f(x)$ is $\mathbf{H}_f=(2/3+2\rho)I_d\succ 0$. Therefore $f(\cdot)$ is strongly convex, and problem (\ref{Ridge Regression}) has a unique solution $x^*$ given by
\begin{equation*}
x^*=\frac{1}{(1+3\rho)}\sum_{i=1}^n\tilde{x}_i/n.
\end{equation*}

In the experiments, we consider $3$ instances with $p=20$ and $n\in\{10,25,100\}$, respectively. Under each instance, we draw $x_{i,0}$  uniformly randomly from $[5,10]^p$. Penalty parameter $\rho=0.01$ and step size $\gamma=0.01$. We assume that $n$ agents constitute a random network, in which each two agents are linked with probability $0.4$. The Metropolis rule is applied to define the weights $w_{ij}$ \cite{sayed2014adaptive}:
\begin{equation*}
w_{ij}=\begin{cases}
1/\max\{d_i,d_j\} & \text{if }i\in \mathcal{N}_i\setminus \{i\},  \\
1- \sum_{j\in\mathcal{N}_i}w_{ij} & \text{if }i=j,\\
0 & \text{if }i\notin \mathcal{N}_i.
\end{cases}
\end{equation*}
Here $d_i$ denotes the degree (number of ``neighbors'') of node $i$, and $\mathcal{N}_i$ is the set of ``neighbors''.

\begin{figure}
	\centering
	\subfigure[Instance $(p,n)=(20,10)$.]{\includegraphics[width=3.5in]{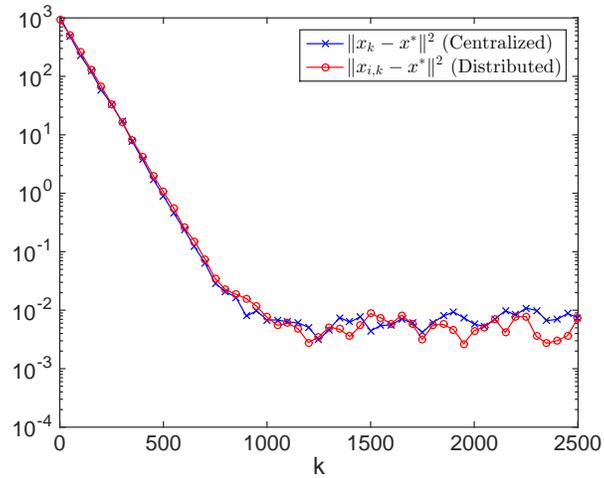}} 
	\subfigure[Instance $(p,n)=(20,25)$.]{\includegraphics[width=3.5in]{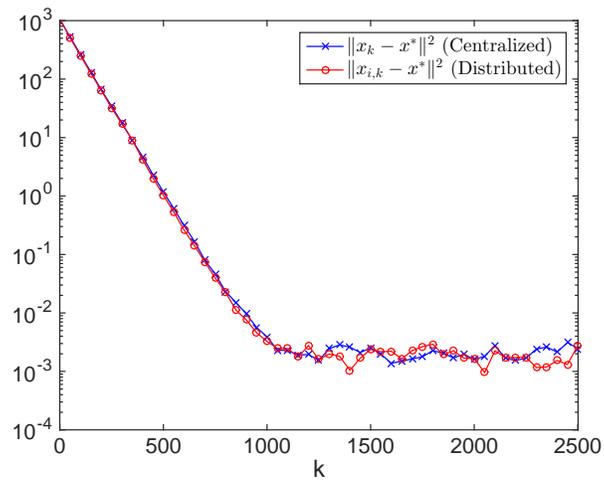}}
	\subfigure[Instance $(p,n)=(20,100)$.]{\includegraphics[width=3.5in]{{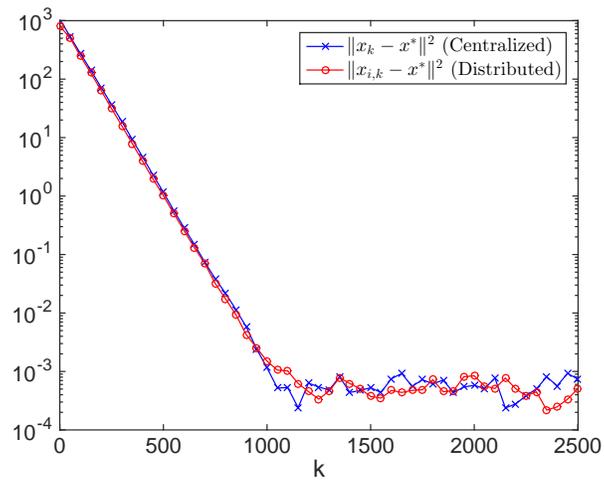}}} 
	\caption{Performance comparison between the distributed gradient tracking method and the centralized algorithm for on-line Ridge regression. For the decentralized method, the plots show the iterates generated 
	by a randomly selected node $i$ from the set $\mathcal{N}$.}
	\label{fig: comparison}
\end{figure}

In Figure \ref{fig: comparison}, we compare the performances of the distributed gradient tracking method (\ref{eq: x_i,k}) and the centralized algorithm (\ref{eq: centralized}) with the same parameters. It can be seen that the two approaches are comparable in their convergence speeds as well as the ultimate error bounds. Furthermore, the error bounds decrease in $n$ as expected from our theoretical analysis.
\section{Conclusions and Future Work}
\label{sec: conclusion}
This paper considers distributed multi-agent optimization over a network, where each agent only has access to inexact gradients of its local cost function. 
We propose a distributed stochastic gradient tracking method and show that the iterates obtained by each agent, using a constant step size value, reach a neighborhood of the optimum (in expectation) exponentially fast. More importantly, in a limit, the error bounds for the distances between the iterates and the optimal solution decrease in the network size, which is comparable with the performance of a centralized stochastic gradient algorithm. 
In our future work, we will consider adaptive step size policies, directed and/or time-varying interaction graphs, and more efficient communication protocols (e.g., gossip-based scheme).
%%%%%%%%%%%%%%%%%%%%%%%%%%%%%%%%%%%%%%%%%%%%%%%%%%%%%%%%%%%%%%%%%%%%%%%%%%%%%%%%

%%%%%%%%%%%%%%%%%%%%%%%%%%%%%%%%%%%%%%%%%%%%%%%%%%%%%%%%%%%%%%%%%%%%%%%%%%%%%%%%

%%%%%%%%%%%%%%%%%%%%%%%%%%%%%%%%%%%%%%%%%%%%%%%%%%%%%%%%%%%%%%%%%%%%%%%%%%%%%%%%

%%%%%%%%%%%%%%%%%%%%%%%%%%%%%%%%%%%%%%%%%%%%%%%%%%%%%%%%%%%%%%%%%%%%%%%%%%%%%%%%

\bibliographystyle{IEEEtran}
\bibliography{mybib}

\section{APPENDIX}

\subsection{Proof of Lemma \ref{lem: Main_Inequalities}}
\label{proof lem: Main_Inequalities}
	By (\ref{eq: x_i,k}),
\begin{equation}
\ox_{k+1}=\ox_k-\alpha \oy_k.
\end{equation}
It follows that
\begin{equation}
\|\ox_{k+1}-x^*\|^2=\|\ox_k-\alpha \oy_k-x^*\|^2
=\|\ox_k-x^*\|^2-2\alpha\langle \ox_k-x^*,\oy_k \rangle+\alpha^2\|\oy_k\|^2.
\end{equation}
Notice that $\bE[\oy_k\mid \mathcal{H}_k]=h(\mx_k)$, and
\begin{equation*}
\bE[\|\oy_k\|^2\mid\mathcal{H}_k]=\bE[\|\oy_k-h(\mx_k)\|^2\mid\mathcal{H}_k]+\|h(\mx_k)\|^2.
\end{equation*}
We have
\begin{multline}
\bE[\|\ox_{k+1}-x^*\|^2\mid \mathcal{H}_k]
=\|\ox_k-x^*\|^2-2\alpha\langle \ox_k-x^*,h(\mx_k) \rangle+\alpha^2\bE[\|\oy_k-h(\mx_k)\|^2\mid\mathcal{H}_k]\\
+\alpha^2\|h(\mx_k)\|^2
\le \|\ox_k-x^*\|^2-2\alpha\langle \ox_k-x^*,h(\mx_k) \rangle+\alpha^2\|h(\mx_k)\|^2+\frac{\alpha^2\sigma^2}{n},
\end{multline}
where the inequality follows from Lemma \ref{lem: oy_k-h_k}. Denote $\lambda=1-\alpha\mu$. In light of Lemma \ref{lem: strong_convexity},
\begin{align*}
& \bE[\|\ox_{k+1}-x^*\|^2\mid \mathcal{H}_k]\\
\le &\|\ox_k-x^*\|^2-2\alpha\langle \ox_k-x^*,\nabla F(\ox_k)\rangle+2\alpha\langle \ox_k-x^*,\nabla F(\ox_k)-h(\mx_k) \rangle\\
&+\alpha^2\|\nabla F(\ox_k)-h(\mx_k)\|^2+\alpha^2\|\nabla F(\ox_k)\|^2-2\alpha^2\langle \nabla F(\ox_k),\nabla F(\ox_k)-h(\mx_k)\rangle+\frac{\alpha^2\sigma^2}{n}\\
= &\|\ox_k-\alpha \nabla F(\ox_k)-x^*\|^2+\alpha^2\| \nabla F(\ox_k)-h(\mx_k)\|^2+\frac{\alpha^2\sigma^2}{n}\\
&+2\alpha \langle \ox_k-\alpha \nabla F(\ox_k)-x^*, \nabla F(\ox_k)-h(\mx_k)\rangle\\
\le &\lambda^2\|\ox_k-x^*\|^2+2\alpha \lambda\| \ox_k-x^*\| \| \nabla F(\ox_k)-h(\mx_k)\|+\alpha^2\| \nabla F(\ox_k)-h(\mx_k)\|^2+\frac{\alpha^2\sigma^2}{n}\\
\le &\lambda^2\|\ox_k-x^*\|^2+\frac{2\alpha \lambda L}{\sqrt{n}}\| \ox_k-x^*\|\| \mx_k-\mathbf{1}\ox_k\|+\frac{\alpha^2 L^2}{n}\|  \mx_k-\mathbf{1}\ox_k\|^2+\frac{\alpha^2\sigma^2}{n}\\
\le &\lambda^2\|\ox_k-x^*\|^2+\alpha\left(\lambda^2\mu\| \ox_k-x^*\|^2+\frac{L^2}{\mu n}\| \mx_k-\mathbf{1}\ox_k\|^2 \right)+\frac{\alpha^2 L^2}{n}\|  \mx_k-\mathbf{1}\ox_k\|^2+\frac{\alpha^2\sigma^2}{n}\\
= &\lambda^2\left(1+\alpha\mu\right)\|\ox_k-x^*\|^2
+\frac{\alpha L^2}{\mu n}\left(1+\alpha\mu\right)\|  \mx_k-\mathbf{1}\ox_k\|^2
+\frac{\alpha^2\sigma^2}{n}\\
\le &\left(1-\alpha\mu\right)\|\ox_k-x^*\|^2+\frac{\alpha L^2}{\mu n}\left(1+\alpha\mu\right)\|  \mx_k-\mathbf{1}\ox_k\|^2
+\frac{\alpha^2\sigma^2}{n}.
\end{align*}
Relation (\ref{Second_Main_Inequality}) follows from the following argument:
	\begin{multline}
\|\mx_{k+1}-\mathbf{1}\ox_{k+1}\|^2=\|W\mx_k-\alpha W\my_k-\mathbf{1}\ox_k+\alpha\mathbf{1}\oy_k\|^2\\
\le \|W\mx_k-\mathbf{1}\ox_k\|^2-2\alpha\langle W\mx_k-\mathbf{1}\ox_k, W\my_k-\mathbf{1}\oy_k\rangle+\alpha^2\|W\my_{k}-\mathbf{1}\oy_k\|^2\\
\le \rho_w^2\|\mx_k-\mathbf{1}\ox_k\|^2+\alpha\rho_w^2\left[\frac{(1-\rho_w^2)}{2\alpha\rho_w^2}\|\mx_k-\mathbf{1}\ox_k\|^2+\frac{2\alpha\rho_w^2}{(1-\rho_w^2)}\|\my_{k}-\mathbf{1}\oy_k\|^2\right]+\alpha^2\rho_w^2\|\my_{k}-\mathbf{1}\oy_k\|^2\\
\le \frac{(1+\rho_w^2)}{2}\|\mx_k-\mathbf{1}\ox_k\|^2+\alpha^2\frac{(1+\rho_w^2)\rho_w^2}{(1-\rho_w^2)}\|\my_{k}-\mathbf{1}\oy_k\|^2,
\end{multline}
where we used Lemma \ref{lem: spectral norm}.

To prove (\ref{Third_Main_Inquality}), we need some preparations first. For ease of exposition we will write $G_k:=G(\mx_k,\boldsymbol{\xi}_k)$ and $\nabla_k:=\nabla F(\mx_k)$ for short. From (\ref{eq: x_k}) and Lemma \ref{lem: spectral norm},
\begin{align*}
\|\my_{k+1}-\mathbf{1}\oy_{k+1}\|^2
= & \|W\my_k+G_{k+1}-G_k-\mathbf{1}\oy_k+\mathbf{1}(\oy_k-\oy_{k+1})\|^2\\
= & |W\my_k-\mathbf{1}\oy_k\|^2+\|G_{k+1}-G_k\|^2+n\|\oy_k-\oy_{k+1}\|^2+2\langle W\my_k-\mathbf{1}\oy_k,G_{k+1}-G_k\rangle\\
&+2\langle W\my_k-\mathbf{1}\oy_k,\mathbf{1}(\oy_k-\oy_{k+1})\rangle+2\langle G_{k+1}-G_k,\mathbf{1}(\oy_k-\oy_{k+1})\rangle\\
= &\|W\my_k-\mathbf{1}\oy_k\|^2+\|G_{k+1}-G_k\|^2-n\|\oy_k-\oy_{k+1}\|^2+2\langle W\my_k-\mathbf{1}\oy_k,G_{k+1}-G_k\rangle\\
\le &\rho_w^2\|\my_k-\mathbf{1}\oy_k\|^2+\|G_{k+1}-G_k\|^2+2\langle W\my_k-\mathbf{1}\oy_k,G_{k+1}-G_k\rangle.
\end{align*}
Notice that 
\begin{align*}
\bE[\|G_{k+1}-G_k\|^2\mid \mathcal{H}_k]= & \bE[\|\nabla_{k+1}-\nabla_k \|^2\mid \mathcal{H}_k]
+2\bE[\langle \nabla_{k+1}-\nabla_k , G_{k+1}-\nabla_{k+1}-G_k+\nabla_k \rangle \mid \mathcal{H}_k]\\
&+\bE[\|G_{k+1}-\nabla_{k+1}-G_k+\nabla_k \|^2\mid \mathcal{H}_k]\\
\le & \bE[\|\nabla_{k+1}-\nabla_k \|^2\mid \mathcal{H}_k]+2\bE[\langle \nabla_{k+1}, -G_k+\nabla_k \rangle \mid \mathcal{H}_k]+2n\sigma^2
\end{align*}
by Assumption \ref{asp: gradient samples},
and
\begin{multline*}
 \bE[\langle W\my_k-\mathbf{1}\oy_k,G_{k+1}-G_k\rangle\mid \mathcal{H}_k]
= \bE[\langle W\my_k-\mathbf{1}\oy_k,\nabla_{k+1}-G_k\rangle\mid \mathcal{H}_k]\\
= \bE[\langle W\my_k-\mathbf{1}\oy_k,\nabla_{k+1}-\nabla_k \rangle\mid \mathcal{H}_k]
+\bE[\langle W\my_k-\mathbf{1}\oy_k, -G_k+\nabla_k \rangle\mid \mathcal{H}_k].
%\le (1+\beta)\|W\my_k-\mathbf{1}\oy_k\|^2+(1+\frac{1}{\beta})\|G_{k+1}-G_k\|^2\\
%\le (1+\beta)\rho_w^2\|\my_k-\mathbf{1}\oy_k\|^2+2(1+\frac{1}{\beta})\|\nabla_{k+1}-\nabla_k \|^2
%+2(1+\frac{1}{\beta})\|G_{k+1}-\nabla_{k+1}-G_k+\nabla_k \|^2
\end{multline*}
We have
\begin{multline}
\label{my_k+1-oy_k+1 pre}
\bE[\|\my_{k+1}-\mathbf{1}\oy_{k+1}\|^2\mid\mathcal{H}_k]
\le \rho_w^2\bE[\|\my_k-\mathbf{1}\oy_k\|^2\mid \mathcal{H}_k]
+\bE[\|\nabla_{k+1}-\nabla_k \|^2\mid \mathcal{H}_k]\\
+2\bE[\langle \nabla_{k+1}, -G_k+\nabla_k \rangle \mid \mathcal{H}_k]
+2\bE[\langle W\my_k-\mathbf{1}\oy_k,\nabla_{k+1}-\nabla_k \rangle\mid \mathcal{H}_k]\\
+2\bE[\langle W\my_k-\mathbf{1}\oy_k, -G_k+\nabla_k \rangle\mid \mathcal{H}_k]+2n\sigma^2.
\end{multline}
Two additional lemmas are in hand.
\begin{lemma}
	\label{lem: 3 nablas}
	\begin{equation*}
	\bE[\langle \nabla_{k+1}, -G_k+\nabla_k \rangle \mid \mathcal{H}_k]\le \alpha L n\sigma^2.
	\end{equation*}
\end{lemma}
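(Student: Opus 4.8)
The plan is to exploit the fact that the only randomness in $\nabla_{k+1}=\nabla F(\mx_{k+1})$ given $\mathcal{H}_k$ is precisely the noise $G_k-\nabla_k$ appearing in the second slot of the inner product. Note that $\mathcal{H}_k$ contains $\mx_k$ but \emph{not} $\boldsymbol{\xi}_k$, so $\mx_{k+1}=W(\mx_k-\alpha\my_k)$ is not $\mathcal{H}_k$-measurable: through $\my_k=W\my_{k-1}+G_k-G_{k-1}$ it depends on $\boldsymbol{\xi}_k$, hence $\nabla_{k+1}$ is correlated with $G_k$. A direct use of the tower property with $\bE[-G_k+\nabla_k\mid\mathcal{H}_k]=0$ therefore fails, and taming this correlation is the crux of the argument.

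The first step is to isolate the noise inside $\mx_{k+1}$. Substituting the $\my_k$ recursion, I would write $\mx_{k+1}$ as a sum of terms built from $\mx_k,\my_{k-1},\nabla_k,G_{k-1}$ (all $\mathcal{H}_k$-measurable) plus the single noisy term $-\alpha W(G_k-\nabla_k)$. Setting $\hat{\mx}_{k+1}:=\bE[\mx_{k+1}\mid\mathcal{H}_k]$ equal to the measurable part, Assumption \ref{asp: gradient samples} gives $\bE[G_k-\nabla_k\mid\mathcal{H}_k]=0$, so that cleanly $\mx_{k+1}-\hat{\mx}_{k+1}=-\alpha W(G_k-\nabla_k)$ with $\hat{\mx}_{k+1}$ deterministic given $\mathcal{H}_k$.

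Next I would split $\nabla_{k+1}=\bigl(\nabla F(\mx_{k+1})-\nabla F(\hat{\mx}_{k+1})\bigr)+\nabla F(\hat{\mx}_{k+1})$ inside the inner product. The second piece contributes $\langle\nabla F(\hat{\mx}_{k+1}),\bE[-G_k+\nabla_k\mid\mathcal{H}_k]\rangle=0$, since $\nabla F(\hat{\mx}_{k+1})$ is $\mathcal{H}_k$-measurable and the noise is conditionally mean zero. For the first piece I would apply Cauchy--Schwarz together with the $L$-Lipschitz bound $\|\nabla F(\mx_{k+1})-\nabla F(\hat{\mx}_{k+1})\|\le L\|\mx_{k+1}-\hat{\mx}_{k+1}\|=\alpha L\|W(G_k-\nabla_k)\|\le\alpha L\|G_k-\nabla_k\|$, where the last inequality uses that the spectral norm of the doubly stochastic $W$ is at most $1$ (Birkhoff), so $\|W\cdot\|_F\le\|\cdot\|_F$. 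This yields the pointwise estimate $\langle\nabla F(\mx_{k+1})-\nabla F(\hat{\mx}_{k+1}),-G_k+\nabla_k\rangle\le\alpha L\|G_k-\nabla_k\|^2$.

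Finally, taking $\bE[\cdot\mid\mathcal{H}_k]$ and applying the variance bound of Assumption \ref{asp: gradient samples} row by row, namely $\bE[\|G_k-\nabla_k\|^2\mid\mathcal{H}_k]=\sum_{i=1}^n\bE[\|g_i(x_{i,k},\xi_{i,k})-\nabla f_i(x_{i,k})\|^2\mid\mathcal{H}_k]\le n\sigma^2$, delivers $\bE[\langle\nabla_{k+1},-G_k+\nabla_k\rangle\mid\mathcal{H}_k]\le\alpha Ln\sigma^2$, as claimed. The only genuinely delicate point is the realization that $\nabla_{k+1}$ must be centered at $\nabla F(\hat{\mx}_{k+1})$ rather than treated as independent of $G_k$; once that centering is in place, the remaining estimate is a routine Cauchy--Schwarz and Lipschitz computation, and the extra factor $\alpha$ in the bound is exactly the step size entering through $\mx_{k+1}-\hat{\mx}_{k+1}=-\alpha W(G_k-\nabla_k)$.
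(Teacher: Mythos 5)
Your proof is correct, and it reaches the stated bound $\alpha L n\sigma^2$ by the same underlying mechanism as the paper — center $\nabla_{k+1}$ at a conditionally deterministic point so that the zero-mean noise kills the dominant term, then use $L$-Lipschitzness to show the residual correlation carries a factor $\alpha$ — but the execution is genuinely different and, in fact, tighter than the paper's. The paper argues agent by agent: it expands $x_{i,k+1}$ in equation (\ref{nabla f_i expand}) keeping only the term $-\alpha g_i(x_{i,k},\xi_{i,k})$ as the source of randomness (thereby dropping the neighbor contributions $-\alpha\sum_{j}w_{ij}g_j(x_{j,k},\xi_{j,k})$ that also depend on $\boldsymbol{\xi}_k$), and then invokes a \emph{component-wise} Lipschitz inequality (\ref{p-component Lipschitz}) that does not actually follow from Assumption \ref{asp: strconvexity}, which only controls norms. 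Your matrix-level argument repairs both issues: writing $\mx_{k+1}-\bE[\mx_{k+1}\mid\mathcal{H}_k]=-\alpha W(G_k-\nabla_k)$ accounts for all the cross-agent coupling at once, the Frobenius Cauchy--Schwarz inequality $\langle A,B\rangle\le\|A\|\,\|B\|$ replaces the unjustified componentwise step, and $\|W\|_2\le 1$ for the doubly stochastic $W$ keeps the constant at $\alpha L n\sigma^2$. The only point worth stating explicitly in a write-up is that the row-wise Lipschitz bounds aggregate to $\|\nabla F(\mx)-\nabla F(\mx')\|\le L\|\mx-\mx'\|$ in Frobenius norm, which is immediate; otherwise your argument is complete and could be substituted for the paper's proof without loss.
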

\begin{proof}
	From (\ref{eq: x_i,k}),
	\begin{equation}
	\label{nabla f_i expand}
	\nabla f_i(x_{i,k+1})=
	\nabla f_i\left(\sum_{i=1}^n w_{ij}x_{j,k}-\alpha\sum_{j=1}^n w_{ij}y_{j,k-1}-\alpha g_i(x_{i,k},\xi_{i,k})+\alpha g_i(x_{i,k-1},\xi_{i,k-1}))\right).
	\end{equation}
	Denote by $[x]_p$ the $p$-th component of any $x\in \mathbb{R}^p$. In light of Assumption \ref{asp: strconvexity},
	\begin{multline}
	\label{p-component Lipschitz}
	\Bigg\lvert[\nabla f_i(x_{i,k+1})]_q-\left[\nabla f_i\left(\sum_{i=1}^n w_{ij}x_{j,k}-\alpha\sum_{j=1}^n w_{ij}y_{j,k-1}-\alpha\nabla f_i(x_{i,k})+\alpha g_i(x_{i,k-1},\xi_{i,k-1})\right)\right]_p\Bigg\rvert\\
	\le \alpha L\lvert[g_i(x_{i,k},\xi_{i,k})-\nabla f_i(x_{i,k})]_q\rvert
	\end{multline}
	in each dimension $1\le q\le p$.
	Then,
	\begin{multline}
	\bE[\langle \nabla f_i(x_{i,k+1}), -g_i(x_{i,k},\xi_{i,k})+\nabla f_i(x_{i,k})\rangle \mid \mathcal{H}_k]\\
	=\sum_{q=1}^{p}\bE[\langle [\nabla f_i(x_{i,k+1})]_q, [-g_i(x_{i,k},\xi_{i,k})+\nabla f_i(x_{i,k})]_q\rangle \mid \mathcal{H}_k]\\
	\le \sum_{q=1}^{p}\alpha L \bE[|[g_i(x_{i,k},\xi_{i,k})-\nabla f_i(x_{i,k})]_q |^2\mid \mathcal{H}_k]
	=\alpha L \bE[\|g_i(x_{i,k},\xi_{i,k})-\nabla f_i(x_{i,k})\|^2\mid \mathcal{H}_k]\le \alpha L \sigma^2.
	\end{multline}
%	Since
%	{\footnotesize\begin{multline*}
%	\bE[\langle \nabla_{k+1}, -G_k+\nabla_k \rangle \mid \mathcal{H}_k]\\
%	=\sum_{i=1}^n\bE[\langle \nabla f_i(x_{i,k+1}), -g_i(x_{i,k},\xi_{i,k})+\nabla f_i(x_{i,k})\rangle \mid \mathcal{H}_k],
%	\end{multline*}}\normalsize
	The desired result then follows.
\end{proof}
%\begin{lemma}
%	\label{lem: 4 nablas square}
%	\begin{equation*}
%	\bE[\|G_{k+1}-\nabla_{k+1}-G_k+\nabla_k \|^2]\le 2n\sigma^2.
%	\end{equation*}
%\end{lemma}
%\begin{proof}
%	\begin{equation*}
%	\bE[\|G_{k+1}-\nabla_{k+1}-G_k+\nabla_k \|^2]=\bE[\|G_{k+1}-\nabla_{k+1}\|^2]+\bE[\|G_k-\nabla_k \|^2]\le 2n\sigma^2.
%	\end{equation*}
%\end{proof}
\begin{lemma}
	\label{lem: Vy_k-oy_k, 4 nablas}
	\begin{equation}
	\bE[\langle W\my_k-\mathbf{1}\oy_k, -G_k+\nabla_k \rangle\mid \mathcal{H}_k]\le \sigma^2.
	\end{equation}
\end{lemma}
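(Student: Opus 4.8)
The plan is to exploit the fact that the factor $W\my_k-\mathbf{1}\oy_k$ is correlated with the fresh gradient noise only through a single, explicit quadratic term. Write $e_k:=G_k-\nabla_k$ for the gradient-noise matrix with rows $e_{i,k}=g_i(x_{i,k},\xi_{i,k})-\nabla f_i(x_{i,k})$; by Assumption~\ref{asp: gradient samples} and the $\mathcal{H}_k$-measurability of each $x_{i,k}$ we have $\bE[e_k\mid\mathcal{H}_k]=0$, and the quantity to bound is $-\bE[\langle W\my_k-\mathbf{1}\oy_k, e_k\rangle\mid\mathcal{H}_k]$. The first step is the algebraic identity $W\my_k-\mathbf{1}\oy_k=(W-\tfrac1n\mathbf{1}\mathbf{1}^{\T})\my_k$, which follows from $\mathbf{1}\oy_k=\tfrac1n\mathbf{1}\mathbf{1}^{\T}\my_k$; I abbreviate $R:=W-\tfrac1n\mathbf{1}\mathbf{1}^{\T}$.

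The crux—and the reason this term is not simply zero—is that $\my_k$ is \emph{not} $\mathcal{H}_k$-measurable: its update $\my_k=W\my_{k-1}+G_k-G_{k-1}$ depends on the current sample $\boldsymbol{\xi}_k$ through $G_k$, so $W\my_k-\mathbf{1}\oy_k$ cannot be pulled out of the conditional expectation. I would therefore split $\my_k$ into its $\mathcal{H}_k$-measurable part $C_k:=W\my_{k-1}-G_{k-1}$ (note $\my_{k-1},\mx_{k-1},\boldsymbol{\xi}_{k-1}$ all lie in $\mathcal{H}_k$) and the current contribution $G_k=\nabla_k+e_k$, giving $R\my_k=R(C_k+\nabla_k)+Re_k$. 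Since $R(C_k+\nabla_k)$ is $\mathcal{H}_k$-measurable and $\bE[e_k\mid\mathcal{H}_k]=0$, the inner product of this part against $e_k$ has zero conditional expectation, and the whole expression collapses to $-\bE[\langle Re_k,e_k\rangle\mid\mathcal{H}_k]$.

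The final step is to control this quadratic form in the noise. Expanding along rows, $\langle Re_k,e_k\rangle=\sum_{i,j}R_{ij}\langle e_{j,k},e_{i,k}\rangle$. Conditioned on $\mathcal{H}_k$, the $e_{i,k}$ are independent across $i$ with zero mean (the $\xi_{i,k}$ are independent and each $x_{i,k}$ is frozen), so every off-diagonal term vanishes and $\bE[\langle Re_k,e_k\rangle\mid\mathcal{H}_k]=\sum_i R_{ii}\,\bE[\|e_{i,k}\|^2\mid\mathcal{H}_k]$. Using $R_{ii}=w_{ii}-\tfrac1n$, this gives $-\bE[\langle Re_k,e_k\rangle\mid\mathcal{H}_k]=\sum_i(\tfrac1n-w_{ii})\,\bE[\|e_{i,k}\|^2\mid\mathcal{H}_k]$. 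Dropping the nonpositive $-w_{ii}$ contribution (legitimate since $w_{ii}\ge0$ by Assumption~\ref{asp: network}) and invoking $\bE[\|e_{i,k}\|^2\mid\mathcal{H}_k]\le\sigma^2$ yields $\sum_i\tfrac1n\sigma^2=\sigma^2$, which is exactly the claimed bound.

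I expect the main obstacle to be the measurability bookkeeping: correctly isolating which part of $W\my_k-\mathbf{1}\oy_k$ is frozen by $\mathcal{H}_k$ and which part carries the fresh randomness $\boldsymbol{\xi}_k$, so that only the genuine noise–noise correlation $\langle Re_k,e_k\rangle$ survives the conditioning. Once that reduction is secured, the conditional independence across agents and the nonnegativity of the diagonal weights $w_{ii}$ finish the argument with no further effort.
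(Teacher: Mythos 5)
Your proposal is correct and follows essentially the same route as the paper's proof: both isolate the $\mathcal{H}_k$-measurable part of $\my_k$ via the update $\my_k=W\my_{k-1}+G_k-G_{k-1}$, use conditional independence of the $\xi_{i,k}$ across agents to kill all cross terms, and reduce the expression to $\sum_i(\tfrac1n-w_{ii})\,\bE[\|g_i(x_{i,k},\xi_{i,k})-\nabla f_i(x_{i,k})\|^2\mid\mathcal{H}_k]\le\sigma^2$. Your matrix formulation with $R=W-\tfrac1n\mathbf{1}\mathbf{1}^{\T}$ is merely a cleaner packaging of the paper's agent-by-agent computation.
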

\begin{proof}
	By (\ref{eq: x_i,k}),
	\begin{equation*}
	\bE[\langle W\my_k-\mathbf{1}\oy_k, -G_k+\nabla_k \rangle\mid \mathcal{H}_k]\\
	=\sum_{i=1}^n\bE\left[\Big\langle \sum_{j=1}^n w_{ij}y_{j,k}-\oy_k, \nabla f_i(x_{i,k})-g_i(x_{i,k},\xi_{i,k})\Big\rangle\Big\vert \mathcal{H}_k\right].
	\end{equation*}
	On one hand,
	\begin{multline*}
	\bE[\langle y_{j,k},\nabla f_i(x_{i,k})-g_i(x_{i,k},\xi_{i,k})\rangle\mid \mathcal{H}_k]\\
	=\bE\left[\Big\langle \sum_{n=1}^{n}v_{jn}y_{n,k-1}+\tilde{\nabla} f_j(x_{j,k})-\tilde{\nabla}f_j(x_{j,k-1}),\nabla f_i(x_{i,k})-g_i(x_{i,k},\xi_{i,k})\Big\rangle\Big\vert \mathcal{H}_k\right]\\
	=\bE\left[\left\langle\tilde{\nabla} f_j(x_{j,k}),\nabla f_i(x_{i,k})-g_i(x_{i,k},\xi_{i,k})\right\rangle\big\vert \mathcal{H}_k\right],
	\end{multline*}
	which gives
	\begin{multline*}
	\bE\left[\Big\langle \sum_{j=1}^n w_{ij}y_{j,k},\nabla f_i(x_{i,k})-g_i(x_{i,k},\xi_{i,k})\Big\rangle\Big\vert \mathcal{H}_k\right]\\
	=\bE[\langle w_{ii}g(x_{i,k},\xi_{i,k}),\nabla f_i(x_{i,k})-g_i(x_{i,k},\xi_{i,k})\rangle\mid \mathcal{H}_k]
	\le 0.
	\end{multline*}
	On the other hand,
\begin{multline*}
	\bE[\langle \oy_k, \nabla f_i(x_{i,k})-g_i(x_{i,k},\xi_{i,k})\rangle\mid \mathcal{H}_k]
	=\bE\left[\Big\langle \frac{1}{n}\sum_{j=1}^{n}\tilde{\nabla}f_j(x_{j,k}), \nabla f_i(x_{i,k})-g_i(x_{i,k},\xi_{i,k})\Big\rangle\Big\vert \mathcal{H}_k\right]\\
	=\bE\left[\Big\langle \frac{1}{n}g_i(x_{i,k},\xi_{i,k}), \nabla f_i(x_{i,k})-g_i(x_{i,k},\xi_{i,k})\Big\rangle\Big\vert \mathcal{H}_k\right].
	\end{multline*}
	We have
	\begin{equation}
	\bE[\langle W\my_k-\mathbf{1}\oy_k, -G_k+\nabla_k \rangle\mid \mathcal{H}_k]
	\le -\frac{1}{n}\sum_{i=1}^n\bE[\langle g(x_{i,k},\xi_{i,k}), \nabla f_i(x_{i,k})-g_i(x_{i,k},\xi_{i,k})\rangle\mid \mathcal{H}_k]
	\le \sigma^2.
	\end{equation}
\end{proof}
By (\ref{my_k+1-oy_k+1 pre}), Lemma \ref{lem: 3 nablas} and Lemma \ref{lem: Vy_k-oy_k, 4 nablas},
\begin{multline}
\label{my_k+1-oy_k+1}
\bE[\|\my_{k+1}-\mathbf{1}\oy_{k+1}\|^2\mid \mathcal{H}_k]\le \rho_w^2\bE[\|\my_{k}-\mathbf{1}\oy_{k}\|^2\mid \mathcal{H}_k]
+\bE[\|\nabla_{k+1}-\nabla_k \|^2\mid\mathcal{H}_k]\\
+2\bE[\langle W\my_k-\mathbf{1}\oy_k, \nabla_{k+1}-\nabla_k \rangle\mid \mathcal{H}_k]+2(n+\alpha Ln+1)\sigma^2.
\end{multline}
%where
%{\footnotesize\begin{multline}
%M_k:=
%2\alpha L \sum_{i=1}^n\bE[\|g_i(x_{i,k},\xi_{i,k})-\nabla f_i(x_{i,k})\|^2\mid \mathcal{H}_k]\\
%+\bE[\|G_{k+1}-\nabla_{k+1}\|^2\mid \mathcal{H}_k]\\+\bE[\|G_k-\nabla_k \|^2\mid \mathcal{H}_k]\\
%-2\sum_{i=1}^n\left(w_{ii}-\frac{1}{n}\right)\bE[\|g_i(x_{i,k},\xi_{i,k})-\nabla f_i(x_{i,k})\|^2\mid \mathcal{H}_k]\\
%= \bE[\|G_{k+1}-\nabla_{k+1}\|^2\mid \mathcal{H}_k]\\
%-\sum_{i=1}^n\left(2w_{ii}-1-\frac{2}{n}-2\alpha L\right)\bE[\|g_i(x_{i,k},\xi_{i,k})-\nabla f_i(x_{i,k})\|^2\mid \mathcal{H}_k].
%\end{multline}}

Now we bound $\|\nabla_{k+1}-\nabla_k \|^2$ and $\langle W\my_k-\mathbf{1}\oy_k, \nabla_{k+1}-\nabla_k \rangle$.
First, by Assumption \ref{asp: strconvexity} and Lemma \ref{lem: spectral norm},
\begin{multline*}
\|\nabla_{k+1}-\nabla_k \|^2\le L^2\|\mx_{k+1}-\mx_k\|^2=L^2\|W\mx_k-\mx_k-\alpha W\my_k\|^2=L^2\|(W-I)(\mx_k-\mathbf{1}\ox_k)-\alpha W\my_k\|^2\\
= \|W-I\|^2L^2\|\mx_k-\mathbf{1}\ox_k\|^2-2\alpha L^2\langle (W-I)(\mx_k-\mathbf{1}\ox_k), W\my_k\rangle+\alpha^2L^2\|W\my_k\|^2\\
=\|W-I\|^2L^2\|\mx_k-\mathbf{1}\ox_k\|^2-2\alpha L^2\langle (W-I)(\mx_k-\mathbf{1}\ox_k), W\my_k-\mathbf{1}\oy_k\rangle+\alpha^2L^2\|W\my_k-\mathbf{1}\oy_k\|^2+\alpha^2 nL^2\|\oy_k\|^2\\
\le \|W-I\|^2L^2\|\mx_k-\mathbf{1}\ox_k\|^2+2\alpha \|W-I\| L^2\rho_w\|\mx_k-\mathbf{1}\ox_k\|\|\my_k-\mathbf{1}\oy_k\|
+\alpha^2 L^2\rho_w^2\|\my_k-\mathbf{1}\oy_k\|^2+\alpha^2 nL^2\|\oy_k\|^2\\
\le 2\|W-I\|^2L^2\|\mx_k-\mathbf{1}\ox_k\|^2+2\alpha^2 L^2\rho_w^2\|\my_k-\mathbf{1}\oy_k\|^2+\alpha^2 nL^2\|\oy_k\|^2.
\end{multline*}
Second,
\begin{multline*}
\langle W\my_k-\mathbf{1}\oy_k, \nabla_{k+1}-\nabla_k \rangle\le L\rho_w\|\my_k-\mathbf{1}\oy_k\| \|(W-I)(\mx_k-\mathbf{1}\ox_k)-\alpha W\my_k\|\\
\le \|W-I\|L\rho_w\|\my_k-\mathbf{1}\oy_k\|\|\mx_k-\mathbf{1}\ox_k\|+\alpha L\rho_w\|\my_k-\mathbf{1}\oy_k\|\|W\my_k-\mathbf{1}\oy_k+\mathbf{1}\oy_k\|\\
\le \|W-I\|L\rho_w\|\my_k-\mathbf{1}\oy_k\|\|\mx_k-\mathbf{1}\ox_k\|+\alpha L\rho_w^2\|\my_k-\mathbf{1}\oy_k\|^2+\alpha \sqrt{n}L\rho_w\|\my_k-\mathbf{1}\oy_k\|\|\oy_k\|.
\end{multline*}
Notice that
\begin{equation*}
\|\oy_k\|\le \|\oy_k-h(\mx_k)\|+\|h(\mx_k)-\nabla F(\ox_k)\|+\|\nabla F(\ox_k)\|\le \|\oy_k-h(\mx_k)\|+\frac{L}{\sqrt{n}}\|\mx_k-\mathbf{1}\ox_k\|+L\|\ox_k-x^*\|.
\end{equation*}
We have 
\begin{multline*}
	\sqrt{n}L\rho_w\|\my_k-\mathbf{1}\oy_k\|\|\oy_k\|\le \sqrt{n}L\rho_w\|\my_k-\mathbf{1}\oy_k\|\left(\|\oy_k-h(\mx_k)\|+\frac{L}{\sqrt{n}}\|\mx_k-\mathbf{1}\ox_k\|+L\|\ox_k-x^*\|\right)\\
	\le L\rho_w^2\|\my_k-\mathbf{1}\oy_k\|^2+nL\|\oy_k-h(\mx_k)\|^2+ L^3\|\mx_k-\mathbf{1}\ox_k\|^2+\frac{1}{2}n L^3\|\ox_k-x^*\|^2,
	\end{multline*}
and
\begin{equation*}
\|\oy_k\|^2\le 3\|\oy_k-h(\mx_k)\|^2+\frac{3L^2}{n}\|\mx_k-\mathbf{1}\ox_k\|^2+3L^2\|\ox_k-x^*\|^2.
\end{equation*}
By (\ref{my_k+1-oy_k+1}) and the above relations,
\begin{multline}
\bE[\|\my_{k+1}-\mathbf{1}\oy_{k+1}\|^2\mid \mathcal{H}_k]
%\le \rho_w^2\bE[\|\my_{k}-\mathbf{1}\oy_{k}\|^2\mid \mathcal{H}_k]+\bE[\|\nabla_{k+1}-\nabla_k \|^2\mid\mathcal{H}_k]+2\bE[\langle W\my_k-\mathbf{1}\oy_k, \nabla_{k+1}-\nabla_k \rangle\mid \mathcal{H}_k]+M_k\\
\le \rho_w^2\bE[\|\my_{k}-\mathbf{1}\oy_{k}\|^2\mid \mathcal{H}_k]+2\|W-I\|^2L^2\|\mx_k-\mathbf{1}\ox_k\|^2\\
+2\alpha^2 L^2\rho_w^2\bE[\|\my_k-\mathbf{1}\oy_k\|^2\mid\mathcal{H}_k]+\alpha^2 nL^2\left(3\bE[\|\oy_k-h(\mx_k)\|^2\mid\mathcal{H}_k]+\frac{3L^2}{n}\|\mx_k-\mathbf{1}\ox_k\|^2+3L^2\|\ox_k-x^*\|^2\right)\\
+2\left(\|W-I\|L\rho_w\|\my_k-\mathbf{1}\oy_k\|\|\mx_k-\mathbf{1}\ox_k\|+\alpha L\rho_w^2\|\my_k-\mathbf{1}\oy_k\|^2\right)\\
+2\left(\alpha L\rho_w^2\bE[\|\my_k-\mathbf{1}\oy_k\|^2\mid\mathcal{H}_k]+\alpha nL\bE[\|\oy_k-h(\mx_k)\|^2\mid\mathcal{H}_k]+\alpha L^3\|\mx_k-\mathbf{1}\ox_k\|^2+\frac{1}{2}\alpha n L^3\|\ox_k-x^*\|^2\right)\\
+2(n+\alpha Ln+1)\sigma^2\\
\le \left(\rho_w^2+4\alpha L\rho_w^2+2\alpha^2 L^2\rho_w^2\right)\bE[\|\my_{k}-\mathbf{1}\oy_{k}\|^2\mid \mathcal{H}_k]\\
+\left(\beta\rho_w^2\bE[\|\my_k-\mathbf{1}\oy_k\|^2\mid\mathcal{H}_k]+\frac{1}{\beta}\|W-I\|^2 L^2\|\mx_k-\mathbf{1}\ox_k\|^2\right)\\
+\left(2\|W-I\|^2L^2+2\alpha L^3+3\alpha^2 L^4\right)\|\mx_k-\mathbf{1}\ox_k\|^2+\left(3\alpha^2 nL^4+\alpha nL^3\right)\|\ox_k-x^*\|^2\\
+\left[3\alpha^2L^2+2\alpha L+2(n+\alpha Ln+1)\right]\sigma^2\\
= \left(1+4\alpha L+2\alpha^2 L^2+\beta\right)\rho_w^2\bE[\|\my_{k}-\mathbf{1}\oy_{k}\|^2\mid \mathcal{H}_k]\\
+\left(\frac{1}{\beta}\|W-I\|^2 L^2+2\|W-I\|^2 L^2+3\alpha L^3\right)\|\mx_k-\mathbf{1}\ox_k\|^2+2\alpha nL^3\|\ox_k-x^*\|^2+M_{\sigma}
\end{multline}
for any $\beta>0$.
% In general, $\sigma^2+M_k\le 2n\sigma^2$. For sufficiently large $w_{ii}$ ($\sim 1/2$), $\sigma^2+M_k\le n\sigma^2$.

\addtolength{\textheight}{-12cm}   % This command serves to balance the column lengths
% on the last page of the document manually. It shortens
% the textheight of the last page by a suitable amount.
% This command does not take effect until the next page
% so it should come on the page before the last. Make
% sure that you do not shorten the textheight too much.

\subsection{Proof of Lemma \ref{lem: rho_M}}
\label{subsec: proof lemma rho_M}
The characteristic function of $M$ is given by
\begin{multline}
g(\lambda):=\text{det}(\lambda I-M)=(\lambda-m_{11})(\lambda-m_{22})(\lambda-m_{33})
-a_{23}a_{32}(\lambda-m_{11})-a_{13}a_{31}(\lambda-m_{22})\\
-a_{12}a_{21}(\lambda-m_{33})-a_{12}a_{23}a_{31}-a_{13}a_{32}a_{21}.
\end{multline}
Necessity is trivial since $\text{det}(\lambda^* I-M)\le 0$ implies $g(\lambda)=0$ for some $\lambda\ge \lambda^*$. We now show $\text{det}(\lambda^* I-M)>0$ is also a sufficient condition.

Given that $g(\lambda^*)=\text{det}(\lambda^* I-M)>0$,
\begin{equation*}
(\lambda^*-m_{11})(\lambda^*-m_{22})(\lambda^*-m_{33})> a_{23}a_{32}(\lambda^*-m_{11})+a_{13}a_{31}(\lambda^*-m_{22})+a_{12}a_{21}(\lambda^*-m_{33}).
\end{equation*}
It follows that
\begin{equation}
\begin{array}{ccc}
\gamma_1(\lambda^*-m_{22})(\lambda^*-m_{33}) & > & a_{23}a_{32}\\
\gamma_2(\lambda^*-m_{11})(\lambda^*-m_{33}) & > & a_{13}a_{31}\\
\gamma_3(\lambda^*-m_{11})(\lambda^*-m_{22}) & > & a_{12}a_{21}
\end{array}
\end{equation}
for some $\gamma_1,\gamma_2,\gamma_3>0$ with $\gamma_1+\gamma_2+\gamma_3\le 1$.
Consider
\begin{multline*}
g'(\lambda)=(\lambda-m_{22})(\lambda-m_{33})+(\lambda-m_{11})(\lambda-m_{33})+(\lambda-m_{11})(\lambda-m_{22})-a_{23}a_{32}-a_{13}a_{31}\\
-a_{12}a_{21}.
\end{multline*}
We have $g'(\lambda)>0$ for $\lambda\in(-\infty,-\lambda^*]\cup[\lambda^*,+\infty)$. Noticing that
\begin{multline*}
g(-\lambda^*)\le -(\lambda^*+m_{11})(\lambda^*+m_{22})(\lambda^*+m_{33})+a_{23}a_{32}(1+m_{11})+a_{13}a_{31}(\lambda^*+m_{22})\\
+a_{12}a_{21}(\lambda^*+m_{33})< 0,
\end{multline*}
all real roots of $g(\lambda)=0$ lie in the interval $(-\lambda^*,\lambda^*)$. By the Perron-Frobenius theorem, $\rho_M$ is an eigenvalue of $M$. We conclude that $\rho_M<\lambda^*$.
\end{document}